\documentclass[11pt]{article}

\usepackage{natbib}
\usepackage{graphicx}%
\usepackage{multirow}%
\usepackage{amsmath,amssymb,amsfonts}%
\usepackage{amsthm}%
\usepackage{mathrsfs}%
\usepackage[title]{appendix}%
\usepackage{xcolor}%
\usepackage{textcomp}%
\usepackage{manyfoot}%
\usepackage{booktabs}%
\usepackage{algorithm}%
\usepackage{algorithmicx}%
\usepackage{algpseudocode}%
\usepackage{listings}%
\usepackage{a4wide}

\usepackage{bbm}
\usepackage{bm}
\usepackage{subcaption}
\usepackage{algorithm}
\usepackage{algpseudocode}

\newcommand{\E}{\mathbb E}

\newcommand{\F}{\mathcal F}
\newcommand{\D}{\mathrm{d}}

\theoremstyle{definition}
\newtheorem{theorem}{Theorem}
\newtheorem{lemma}[theorem]{Lemma}

\newtheorem{proposition}[theorem]{Proposition}

\newtheorem{assumption}{Assumption}
\newtheorem{definition}{Definition}

\usepackage{authblk}   

\begin{document}
\title{Long-memory Markov chains with power-law intensities}

\author[1]{Kyungsub Lee}

\affil[1]{Department of Statistics, Yeungnam University,
	Gyeongsan, Republic of Korea}

\maketitle

\begin{abstract}
	We introduce a self-exciting point process with power-law intensity dynamics that admits a finite-dimensional Markovian state representation.
	The model is constructed to preserve the local jump and slope update structure of power-law Hawkes processes, while replacing global history dependence with a nonlinear Markov chain governing the intensity dynamics.
	Within a general state-space framework, we establish irreducibility, aperiodicity, and the T-chain property under mild regularity conditions on the inter-arrival time distribution.
	Under an explicit stability condition, we further prove that the latent state process is positive Harris recurrent, ensuring the existence of a unique invariant distribution.
	Simulation results based on the local Whittle estimator show that the proposed Markovian intensity model exhibits long-memory behavior near the boundary of the stability region.
\end{abstract}

\section{Introduction}

This paper introduces a class of Markov chains exhibiting long-memory behavior, driven by intensities that decay according to a power law. 
The proposed process is self-exciting in the sense that each event triggers an increase in the intensity, which subsequently relaxes over time according to a power-law decay. 
By augmenting the states to include the slope of the intensity at the moment of excitation, the resulting dynamics form a fundamentally two-dimensional Markov chain that is nevertheless capable of generating long-range dependence.

Self-exciting intensities with a decaying structure provide a stable framework for modeling event-time dynamics in a wide range of natural and social systems.
The Hawkes process \citep{Hawkes1, Hawkes2} is the canonical example, in which each event increases the intensity and the excitation decays over time according to a prescribed kernel.
Exponential decay leads to rapid attenuation and short-range dependence, whereas power-law decay induces slow relaxation and persistent temporal dependence \citep{hardiman2013critical, bacry2016estimation, zhang2016modeling, nystrom2022hawkes}.

Despite their empirical appeal, Hawkes processes with power-law kernels lack a finite-dimensional Markovian representation.
Unlike exponential-kernel Hawkes processes, which admit a Markov formulation via intensity state, power-law kernels induce dependence on the entire event history.
This inherent non-Markovian structure complicates theoretical analysis and practical tasks such as likelihood-based inference, simulation, and control.
As a result, existing work often relies on approximations or alternative constructions, including kernel truncation, sums of exponentials, and nonparametric methods.

Several approaches have been proposed to mitigate the non-Markovian nature of Hawkes processes with power-law kernels. 
\cite{bacry2011nonparametric} introduce a nonparametric estimation method based on second-order statistics, which avoids parametric kernel specification but retains full historical dependence. 
\cite{zhang2022efficient} develop computationally efficient inference procedures using basis expansions of the kernel, effectively approximating infinite memory with a finite-dimensional representation. 
\cite{Zhuang2020NonparametricFramework} proposes a general nonparametric framework for Hawkes kernel estimation that emphasizes flexibility over Markovian structure. 
\cite{bonnet2025nonparametric} further refine nonparametric Hawkes modeling through regularization and improved estimation techniques, while remaining non-Markovian. 
\cite{khabou2025markov} explicitly target Markovian approximations by introducing finite-dimensional latent-state representations for general Hawkes kernels.

Long memory in inter-event durations has also been modeled within the autoregressive conditional duration (ACD) framework of \cite{EngleRussell1998}. 
Drawing on the ARFIMA approach of \cite{granger1980longmemory}, a number of extensions have been proposed; we mention only a few representative examples. 
For instance, \cite{deo2010longmemory} provide semiparametric evidence of long memory in intertrade durations of NYSE stocks and propose a latent-variable long-memory stochastic duration model, 
while \cite{feng2015forecasting} develop a fractionally integrated Log-ACD model with a semiparametric scale function for forecasting trading volumes and related quantities. 
These models capture long-range dependence in event timing through fractional integration, but do not admit a finite-dimensional Markovian representation.

This paper introduces a self-exciting point process that combines long-memory behavior with finite-dimensional Markovian dynamics.
The model retains the local update mechanism of power-law Hawkes processes at event times, capturing both intensity jumps and slope updates, while replacing global history dependence with a 2-dimensional latent state.
Between events, the conditional intensity decays according to a power law driven by the latent state, which is updated at each arrival in a manner consistent with power-law excitation.
The resulting process is Markovian and reproduces key features of power-law self-excitation without requiring the full event history.

Related work has explored Hawkes-type models augmented with additional state variables to encode memory or inhibition effects.
\cite{CostaMaillardMuraro2024} analyze a discrete-time Hawkes process with inhibition and finite-length memory, providing a near-complete stability characterization.
Related Markovian constructions include the renewal-based
stability analysis of signed Hawkes processes in
\cite{costa2020renewal} and the ephemerally self-exciting
process of \cite{daw2022ephemerally}, which confines each
event's excitation to a random duration; our model instead
encodes power-law decay in a fixed two-dimensional latent state.

Variable-length memory formulations are studied in \cite{hodara2017hawkes} and more recently in \cite{quayle2025hawkesprocessesvariablelength}, where the conditional intensity depends on a dynamically selected portion of the past.
\cite{Schmutz2022MeanField} studies a mean‑field Hawkes process with age and leaky memory, proving propagation of chaos in the large‑population limit.
These approaches retain explicit dependence on past events, leading to higher-order or variable-dimensional state representations.
In contrast, our model replaces explicit history dependence with a fixed-dimensional latent Markov state, preserving power-law self-excitation while yielding a tractable continuous-time formulation.
 
The proposed model can be expressed within a general inverse-integrated intensity framework, in which inter-arrival times are generated by applying an inverse transform to an independent sequence through the integrated conditional intensity as in~\cite{lee2025self,lee2026forecasting}.
This formulation enables exact simulation, likelihood-based inference, and tractable analysis.

The process further admits a rigorous stability and ergodicity analysis within a state-space Markov chain framework. 
Following the general theory of Markov chains developed in \cite{meyn2009markov}, 
we formulate the dynamics as a nonlinear stochastic system driven by exogenous inter-arrival times and analyze the associated control model.
The embedded Markov chain is shown to be $\psi$-irreducible, aperiodic,
and positive Harris recurrent under the stability condition, 
implying the existence of a unique invariant distribution and long-run stability.

Our approach aligns with a broader line of work that leverages this framework to obtain invariant measure existence/uniqueness and quantitative convergence properties for Markovian state-space models; see, e.g., \citet{MayerhoferStelzerVestweber2020AffineCones,ShaoWangWu2023HybridSwitching}. 
In the Hawkes literature, similar recurrence-and-stationarity arguments for Markovian representations of the intensity have also been used as a key technical input for asymptotic analysis \citep{PalmowskiPojerThonhauser2025RuinHawkes} and for ergodic characterization under general excitation kernels \citep{KwanChenDunsmuir2025ErgodicGeneralKernelHawkes}.

The remainder of the paper is organized as follows.
Section~\ref{Sect:motive} reviews the motivation based on power-law Hawkes processes.
Section~\ref{Sect:model} introduces the proposed Markovian power-law intensity model and its structural properties.
Section~\ref{Sect:MC} discusses the Markov properties of the model.
Section~\ref{Sect:simul} presents simulation results and long-memory analysis.
Section~\ref{Sect:conc} concludes.

\section{Motivation}\label{Sect:motive}

A primary motivation is the Hawkes process with a power-law kernel, which exhibits long-memory behavior.
Let $\{T_n\}_{n\ge1}$ denote the event times.
Consider a Hawkes process with conditional intensity
\[
\lambda(t)=\mu+\sum_{T_i<t} g(t-T_i),
\qquad 
g(u)=\frac{\alpha}{(u+\gamma)^p}, \quad u>0,
\]
where $\mu>0$, $\alpha>0$, $\gamma>0$, and $p>0$.

For $p>1$, the kernel is integrable,
\[
\int_0^\infty g(u)\,\D u<\infty,
\]
and the process is stationary when the branching ratio
$\int_0^\infty g(u)\,\D u<1$, i.e.,
\[
\frac{\alpha}{p-1}\gamma^{1-p}<1.
\]
When $1<p<2$, the kernel decays polynomially,
\[
g(u)\sim \alpha u^{-p}, \qquad u\to\infty,
\]
leading to slow temporal decay.
In this regime, the autocovariance of the counting process satisfies
\[
\mathrm{Cov}\!\left(N(t+h)-N(t),\,N(0,1]\right)
\sim C h^{-(p-1)}, \qquad h\to\infty,
\]
for some $C>0$.

At an event time $T_n$, the conditional intensity satisfies
\begin{align}
	\lambda(T_n^+)-\lambda(T_n^-)
	&= g(0)=\frac{\alpha}{\gamma^p}, \label{Eq:update1}\\
	\lambda'(T_n^+)-\lambda'(T_n^-)
	&= g'(0)=-\frac{\alpha p}{\gamma^{p+1}}. \label{Eq:update2}
\end{align}
Thus, each arrival induces a constant upward jump in the intensity and a constant downward jump in its slope.
Between events, both quantities decay according to the power-law kernel.

Unlike the exponential-kernel Hawkes process, the power-law Hawkes process does not admit a finite-dimensional Markovian representation.
In the exponential case, superposed contributions share a common decay rate, yielding a closed Markovian state.
For power-law kernels, the decay depends on elapsed time since each event, so the intensity depends on the entire history.
Our objective is to construct a Markov process that preserves the local update structure in
Eqs.~\eqref{Eq:update1}--\eqref{Eq:update2}.

\section{Model}\label{Sect:model}	

Let \(\{T_n\}_{n \ge 1}\) denote the sequence of event arrival times.
For \(T_n < t' \le T_{n+1}\), define the local time \(t = t' - T_n\).
The conditional intensity is given by
\begin{equation}
	\lambda(t') = \lambda_{n}(t) = \mu + \frac{a_n}{(t  + c_n)^p},
	\label{eq:intensity}
\end{equation}
where \(\mu > 0\) is the baseline intensity, \(p > 0\) is the fixed decay exponent, and \(a_n, c_n > 0\) are stochastic processes determined at  $T_n$.

At the \(n\)-th event time \(T_n\), given the inter-arrival time \(\tau_n = T_n - T_{n-1}\), the processes \(a_n\) and \(c_n\) are updated to satisfy the following  conditions
\begin{align*}
	& \lambda_{n}(0) = \lambda_{n-1}(\tau_{n}) + \frac{\alpha}{\gamma^p} \\
	&\frac{\D \lambda_{n}}{\D t} (0) = \frac{\D \lambda_{n-1}}{\D t}(\tau_{n})  + \frac{\D}{\D t} \left. \frac{\alpha}{(\gamma + t)^p} \right|_{t = \tau_n} = \frac{\D \lambda_{n-1}}{\D t}(\tau_{n})   - \frac{p \alpha}{\gamma^{p+1}}
\end{align*}
where \(\alpha\) and \(\gamma\) are model parameters.
These equations ensure 
that our model follows the update rule in Eqs.~\eqref{Eq:update1}~and~\eqref{Eq:update2}
as in the Hawkes process with power-law kernel.
Consequently,
\begin{align}
	&\frac{a_n}{c_n^p} = \frac{a_{n-1}}{(\tau_n + c_{n-1})^p} + \frac{\alpha}{\gamma^p} \label{eq:update_level}\\
	&\frac{a_n}{c_n^{p+1}} = \frac{a_{n-1}}{(\tau_n + c_{n-1})^{p+1}} + \frac{\alpha}{\gamma^{p+1}}. \label{eq:update_slope}
\end{align}
Solving the equation yields
\begin{align*}
	c_n &= \frac{a_{n-1}\gamma^{p+1}\,(\tau_n+c_{n-1}) \;+\; \alpha\gamma\,(\tau_n+c_{n-1})^{p+1}}
	{a_{n-1}\gamma^{p+1} \;+\; \alpha\,(\tau_n+c_{n-1})^{p+1}}, \\
	a_n &= c_n^p  \left( \frac{a_{n-1}}{(\tau_n + c_{n-1})^p} + \frac{\alpha}{\gamma^p} \right).
\end{align*}

Meanwhile, an alternative parameterization is convenient for analysis.
Let
$$ X_n = \frac{a_n}{c_n^p}, \quad \xi = \frac{\alpha}{\gamma^p}.$$
Then the conditional intensity function can be written as
\begin{equation}
\lambda_n(t; X_n, c_n) = \mu + X_n \left(1 + \frac{t}{c_n} \right)^{-p}. \label{eq:intensity2}
\end{equation}
Under this parameterization, the update rules become
\begin{align*}
	&X_n = X_{n-1} \left(1  + \frac{\tau_n}{c_{n-1}} \right)^{-p} + \xi \\
	&\frac{X_n}{c_n} = \frac{X_{n-1}}{c_{n-1}} \left(1  + \frac{\tau_n}{c_{n-1}} \right)^{-p-1} + \frac{\xi}{\gamma}.
\end{align*}
Let
$$r_n = 1 + \frac{\tau_n}{c_{n-1}},$$
then $r_n^{-p}$ and  $r_n^{-p-1}$ act as decay factors, satisfying
$0 < r_n^{-p}, r_n^{-p-1} < 1$.
With this notation, $c_n$ can be expressed as
$$
c_n
\;=\;
\frac{X_{n-1}\,r_n^{-p} + \xi}{\dfrac{X_{n-1}}{c_{n-1}}r_n^{-(p+1)} + \frac{\xi}{\gamma} }.
$$

We now formalize this construction.
We begin with a general modeling framework, adapted from \cite{lee2025self}.

\begin{definition}\label{Def:general_vector}
	Let $\{ \varepsilon_n \}_{n \in \mathbb{N}}$ be a sequence of independent and identically distributed random variables. 
	Suppose there exists a stochastic process $\{ \mathbf{X}_n \}$ taking values in $\mathbb{R}^d$ for some $d \ge 1$, which governs the distribution of inter-arrival times $\{ \tau_n \}$.
	Let \(\Phi(t,\mathbf{x})\) be a function defined for \(t \ge 0\) and \(\mathbf{x} \in \mathbb{R}^d\), and assume that \(\Phi(\cdot,\mathbf{x})\) is strictly increasing in \(t\) for each fixed \(\mathbf{x}\).
	The inter-arrival time \(\tau_n\) is related to \(\varepsilon_n\) by
	\begin{equation}
		\varepsilon_n = \Phi(\tau_n, \mathbf{X}_{n-1}) 
		\qquad \text{equivalently,} \qquad
		\tau_n = \Phi^{-1}(\varepsilon_n, \mathbf{X}_{n-1}) ,
		\label{Eq:tau_vector}
	\end{equation}
	where the inverse is taken with respect to $t$, holding $\mathbf{x}$ fixed.
	The process $\mathbf{X}_n$ evolves according to an update function $\Psi$ such that
	\begin{equation}
		\mathbf{X}_n = \Psi(\mathbf{X}_{n-1}, \tau_n). \label{Eq:Psi0}
	\end{equation}
	The associated point process \(N\) on \(\mathbb{R}^+\) is defined by
	\[
	N = \sum_{n \in \mathbb N} \delta_{T_n}, 
	\qquad  \qquad 
	T_n = \sum_{i=1}^n \tau_i
	\]
	where \(\delta_t\) denotes the Dirac measure at \(t\).
\end{definition}

We now specialize this framework to the model of interest, 
in which the latent rate function is self-exciting and decays 
according to a power law between successive events, 
with the decay governed by the state variables $(X_n, c_n)$.

\begin{definition}\label{Def:Model}
	The latent processes $\mathbf{X}_n = \{(X_n, c_n)\}$ evolve according to 
	\begin{equation}
		\mathbf{X}_n = \Psi(\mathbf{X}_{n-1}, \tau_n) = (\Psi_x(x, c, t), \Psi_c(x, c, t)).
		\label{Eq:Psi}
	\end{equation}
	where
	\begin{align}
		\Psi_x(x, c, t) &= x \left(1 + \frac{t}{c}\right)^{-p} + \xi,  \label{Eq:Psi_x}\\
		\Psi_c(x, c, t) &= \frac{\Psi_x(x, c, t)}{\,\frac{x}{c} \left(1 + \tfrac{t}{c}\right)^{-p-1} + \tfrac{\xi}{\gamma}}, \label{Eq:Psi_c}
	\end{align}
	with parameters $\mu> 0, \gamma> 0, \xi > 0$ and $p>1$.
	Equivalently, the update equations can be written as
	\begin{align*}
		&X_n = X_{n-1} \left(1  + \frac{\tau_n}{c_{n-1}} \right)^{-p} + \xi \\
		&\frac{X_n}{c_n} =  \frac{X_{n-1}}{c_{n-1}} \left(1  + \frac{\tau_n}{c_{n-1}} \right)^{-p-1} + \frac{\xi}{\gamma}.
	\end{align*}
	Define $\Phi(t, x, c)$ as
	\begin{align}
		\Phi(t, x, c) & = \int_0^t  \lambda_{n-1}(s; x, c) \D s= \int_0^t \left( \mu + \frac{x c^p}{(c+s)^p} \right)\, \D s \label{Eq:Phi1}\\
		& = \mu t + \dfrac{x c}{p-1}\left( 1 - \bigg(1+\dfrac{t}{c}\bigg)^{-p+1}  \right). \label{Eq:Phi}
	\end{align}
	The inter-arrival time $\tau_n$ is generated via the inverse transform
	\[
	\tau_n = \Phi^{-1}(\varepsilon_n,  X_{n-1}, c_{n-1}),
	\]
	where the inverse is taken with respect to $t$ for fixed $x$ and $c$,
	and $\{\varepsilon_n\}$ is an i.i.d. sequence drawn from $F_\varepsilon$
	satisfying $F_\varepsilon(0) = 0$ (so that $\varepsilon_n \ge 0$ a.s.), $\mu_{\varepsilon} = \mathbb{E}[\varepsilon] < \infty$,
	and $\mathbb{E}[\varepsilon^2] < \infty$.
\end{definition}

Here, $\lambda$ in Eq.~\eqref{Eq:Phi1} plays the role 
of the conditional intensity when $F_\varepsilon$ is the unit exponential 
distribution; in the general case, it is not a conditional intensity 
in the strict sense, but serves as the state-driven rate function 
governing the shape of the inter-arrival time distribution.

Let $\E_{x,c}[\,\cdot\,]$ denote the conditional expectation given $X_{n-1} =x$ and $c_{n-1} = c$.
The model relation
$\Phi(\tau_n,x,c)=\varepsilon_n$ implies
\[
F_{\tau}(s)=\mathbb{P}(\tau_n\le s)
=F_{\varepsilon}(\Phi(s,x,c)),
\qquad
f_{\tau}(s)=f_{\varepsilon}(\Phi(s,x,c))\,\frac{\partial \Phi(s,x,c)}{\partial s}.
\]
Hence,
\[
\mathbb{E}_{x,c}[\tau_n]
=\int_0^{\infty} s\, f_{\varepsilon}(\Phi(s,x,c))\, \frac{\partial \Phi(s,x,c)}{\partial s}\, \D s.
\]
Let $\bar F$ be a survival function corresponding to a generic distribution $\F$. 
By integration by parts,
$$
\begin{aligned}
	\mathbb{E}_{x,c}[\tau_n] &= \left[ -s \bar{F}_{\tau}(s) \right]_{0}^{\infty} + \int_{0}^{\infty} \bar{F}_{\tau}(s) \D s \\
	&= \left[ -s \mathbb P(\tau_n > s) \right]_{0}^{\infty} + \int_{0}^{\infty} \mathbb P(\tau_n > s) \D s \\
	& =\int_0^{\infty}\mathbb{P}(\tau_n>s)\, \D s,
\end{aligned}
$$
where the boundary term vanishes  since 
$\mathbb{E}_{x,c}[\tau_n] < \infty$ under the moment conditions 
$\mu_\varepsilon < \infty$ imposed on $F_\varepsilon$.
Using
$\mathbb{P}(\tau_n>s)=\bar F_{\varepsilon}(\Phi(s,x,c))$, we obtain
\begin{equation}
	\mathbb{E}_{x,c}[\tau_n]
	=\int_0^{\infty}\bar F_{\varepsilon}(\Phi(t,x,c))\,\D t \label{Eq:CE_tau}
\end{equation}
and
\begin{equation}
\mathbb{E}_{x,c}[\tau_n^k] = \int_0^\infty k t^{k-1} \bar{F}_\varepsilon(\Phi(t, x, c)) \D t. \label{Eq:CM_tau}
\end{equation}

Using $f_\tau(s) = f_\varepsilon(\Phi(s,x,c))\,\lambda_n(s;x,c)$, 
the log-likelihood based on $\{\tau_n\}_{n=1}^N$ is
\begin{equation}
	\log L
	=
	\sum_{n=1}^N
	\left[
	\log f_\varepsilon\!\left(\Phi(\tau_n, X_{n-1}, c_{n-1})\right)
	+
	\log \lambda_n(\tau_n; X_{n-1}, c_{n-1})
	\right].
\end{equation}

\subsection{Properties of $X$ and $c$}

\begin{proposition}
	The one-step update $c_n$ is bounded.
\end{proposition}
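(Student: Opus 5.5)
Write $c_n$ in the form given earlier,
\[
c_n=\frac{X_{n-1}r_n^{-p}+\xi}{\frac{X_{n-1}}{c_{n-1}}r_n^{-(p+1)}+\frac{\xi}{\gamma}},\qquad r_n=1+\frac{\tau_n}{c_{n-1}}\ge 1 .
\]
The plan is to read this as a ratio of two weighted sums and bound it by the larger of the two ``componentwise'' ratios.

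Set $A=X_{n-1}r_n^{-p}\ge 0$ and $B=\xi>0$. Then the numerator is $A+B$. The denominator is $A/D_1+B/D_2$ with $D_1=c_{n-1}r_n=c_{n-1}+\tau_n$ and $D_2=\gamma$, because
\[
\frac{X_{n-1}}{c_{n-1}}r_n^{-(p+1)}=\frac{A}{c_{n-1}r_n}.
\]
So $c_n$ is a weighted harmonic mean,
\[
\frac{1}{c_n}=w\,\frac{1}{c_{n-1}+\tau_n}+(1-w)\,\frac{1}{\gamma},\qquad w=\frac{A}{A+B}\in[0,1).
\]
This gives $\min(\gamma,\,c_{n-1}+\tau_n)\le c_n\le \max(\gamma,\,c_{n-1}+\tau_n)$. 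In particular $c_n>0$ and $c_n$ is finite.

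These bounds grow with $\tau_n$, so they are not yet uniform. To get a bound free of $\tau_n$ and $c_{n-1}$, use $1/c_n\ge (1-w)/\gamma$, which gives
\[
c_n\le \frac{\gamma}{1-w}=\gamma\,\frac{X_{n-1}r_n^{-p}+\xi}{\xi}\le \gamma\Bigl(1+\frac{X_{n-1}}{\xi}\Bigr),
\]
since $r_n\ge 1$. Equivalently, $c_n\le \gamma X_n/\xi$. This bound holds for every $\tau_n\ge 0$ and every $c_{n-1}>0$, and depends only on $X_{n-1}$.

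The main obstacle is deciding which sense of ``bounded'' the statement intends. There are two natural readings.

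First, for fixed $(X_{n-1},c_{n-1})$, $c_n$ is bounded uniformly in $\tau_n$. This follows from the display above.

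Second, one might want a bound independent of $X_{n-1}$. Then I would use the finer estimate $1/c_n=w/(c_{n-1}+\tau_n)+(1-w)/\gamma$ together with $w\to1$ only when $X_{n-1}r_n^{-p}\gg\xi$. In that regime I would show that $c_n\approx c_{n-1}+\tau_n$, so that no bound uniform in the state exists without controlling $X$. I would therefore state the result as $\gamma\le c_n$ when $c_{n-1}+\tau_n\ge\gamma$, and $c_n\le \max\{\gamma,\,c_{n-1}+\tau_n\}$ together with $c_n\le\gamma X_n/\xi$.

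The lower bound should be checked the same way, giving $c_n\ge\min(\gamma,c_{n-1})$, since $\tau_n\ge0$. This yields the two-sided control used in the later stability analysis.
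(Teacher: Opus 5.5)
Your argument is correct, but it takes a genuinely different route from the paper.

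\textbf{The paper's route.} The paper fixes $(x,c)$ and treats $c_n$ as a function of $r\in[1,\infty)$. It then studies the sign of $c_n'(r)$. After clearing positive factors, this sign is the sign of $H(r)=-cpr+\gamma(p+1)+\frac{\gamma x}{\xi}r^{-p}$, which is strictly decreasing. Hence $r\mapsto c_n(r)$ is either decreasing or unimodal, and $\sup_r c_n=\max\{c_n(1),c_n(r^*)\}$, where $r^*$ is the implicitly defined root of $H$.

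\textbf{Your route.} You instead observe that $1/c_n$ is a convex combination of $1/(c_{n-1}+\tau_n)$ and $1/\gamma$, with weight $w=X_{n-1}r_n^{-p}/(X_{n-1}r_n^{-p}+\xi)$. This needs no differentiation at all. It yields two things:
\begin{itemize}
\item the sandwich $\min(\gamma,c_{n-1}+\tau_n)\le c_n\le\max(\gamma,c_{n-1}+\tau_n)$, which in particular shows that the $c$-update preserves $[\gamma,\infty)$; the paper's choice of state space $[\xi,\infty)\times[\gamma,\infty)$ silently relies on this;
\item the explicit bound $c_n\le\gamma X_n/\xi\le\gamma(1+X_{n-1}/\xi)$, which is uniform in $\tau_n$ and also in $c_{n-1}$. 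For fixed $x$ it is sharp as a bound over $(c_{n-1},\tau_n)$: let $c_{n-1}\to\infty$ with $\tau_n=0$.
\end{itemize}

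\textbf{What each buys.} The paper's calculus gives the exact supremum over $\tau_n$ and the shape of $\tau\mapsto c_n$. Your identity gives a closed-form bound that is automatically uniform over compact sets of $(x,c)$. That uniformity is what is actually needed when this proposition is invoked to bound the drift on the petite set $C=[0,K_x]\times[\gamma,C_1]$ in the Foster--Lyapunov argument. The paper's $\max\{c_n(1),c_n(r^*)\}$ would require an extra continuity argument in $(x,c)$ to serve that purpose.

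\textbf{Minor point.} Your discussion of ``which sense of bounded'' is unnecessary. The paper means boundedness in $\tau_n$ for fixed $(X_{n-1},c_{n-1})$, and your display already settles that. Your remark that no bound uniform in $X_{n-1}$ exists is correct (take $X_{n-1}r_n^{-p}\gg\xi$, so that $c_n\approx c_{n-1}+\tau_n$), but it is not part of the claim.
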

\begin{proof}
	Fix $c_{n-1} = c$ and $X_{n-1} = x$. 
	Recall that the update formula for $c_n$ can be written as a function of the growth ratio $r = 1 + \frac{\tau}{c}$ in the form
	$$c_n(r) = \frac{x r^{-p} + \xi}{\frac{x}{c} r^{-(p+1)} + \frac{\xi}{\gamma}}, \quad r \in [1, \infty)$$
	where $p > 1$ and $\xi, \gamma, c, x > 0$.
	Define the numerator and denominator as
	\[
	N(r) = x r^{-p} + \xi,
	\qquad
	D(r) = \frac{x}{c} r^{-(p+1)} + \frac{\xi}{\gamma}.
	\]
	Then
	$$c_n'(r) = \frac{N'(r)D(r) - N(r)D'(r)}{[D(r)]^2}.$$
	A direct calculation shows that the numerator equals
	\begin{equation}
	\frac{x^2}{c}r^{-2p-2} - \frac{p \xi x}{\gamma}r^{-(p+1)} + \frac{(p+1)\xi x}{c}r^{-(p+2)}.\label{Eq:eq1}
	\end{equation}
	Thus, the condition $c_n'(r) = 0$ is equivalent to finding $r$ such that
	$$H(r) = - c p r + \gamma(p+1) + \frac{\gamma x}{\xi} r^{-p} = 0,$$
	which is obtained by multiplying Eq.~\eqref{Eq:eq1} by $\frac{\gamma c}{\xi x} r^{p+2}$.
	Let $r^*$ be the solution of $H(r) =0$.
	Note that
	$$ H'(r) = -\frac{p \gamma x}{\xi} r^{-(p+1)} - cp < 0, \qquad r \geq 1,$$
	which implies $H(r)$ is strictly decreasing on \([1,\infty)\).
	Thus, either $H(r)$ is all negative for all $r \geq 1$, or $H(r) > 0$ for $r < r^*$ and $H(r) < 0$ for $r > r^*$.
	Moreover, the sign of \(H(r)\) coincides with that of \(c_n'(r)\).
	Consequently, either \(c_n(r)\) is strictly decreasing on \([1,\infty)\), or there exists a unique maximizer \(r^* \ge 1\) such that \(c_n(r)\) is increasing for \(r<r^*\) and decreasing for \(r>r^*\).
	In either case, the supremum of \(c_n\) is attained at
	\begin{equation}
	\sup c_{n} = \max \{ c_n(1), c_n(r^*) \}. \label{Eq:max_cn}
	\end{equation}
	This completes the proof.
\end{proof}

\begin{lemma}\label{lem:mean-tau-bound}
	Let $I(K) = \int_{K}^{\infty} \bar{F}_{\varepsilon}(u) \D u$. 
	For all $x,c>0$, the inter-arrival time $\tau_n$ satisfies
	\begin{equation}
		\mathbb{E}_{x,c}[\tau_n] \le \frac{\mu_{\varepsilon}}{\mu + k_p x} + \dfrac{I\left((\mu + k_p x)c\right)}{\mu}, \quad\quad k_p = \frac{1-2^{\,1-p}}{p-1}.
	\end{equation}
	For any $x_* > 0$, the conditional expectation $\mathbb{E}_{x,c}[\tau_n]$ satisfies:
	\[
	\mathbb{E}_{x,c}[\tau_n] \le 
	\begin{cases} 
		\dfrac{\mu_{\varepsilon}}{\mu + k_p x} + \dfrac{I((\mu + k_p x)c)}{\mu}, & x \ge x_*, \\[3ex]
		\dfrac{\mu_{\varepsilon}}{\mu} + \dfrac{I(\mu c)}{\mu}, & 0 \le x < x_*.
	\end{cases}
	\]
\end{lemma}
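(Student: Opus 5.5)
Starting from the representation \eqref{Eq:CE_tau}, $\mathbb{E}_{x,c}[\tau_n]=\int_0^\infty \bar F_\varepsilon(\Phi(t,x,c))\,\D t$, the plan is to bound $\Phi$ from below by a piecewise-linear function of $t$ and then use the monotonicity of $\bar F_\varepsilon$. The natural split is at $t=c$, the scale on which the power-law term $x(1+t/c)^{-p}$ is still comparable to $x$.

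\emph{Lower bound for $\Phi$.} For $0\le t\le c$ we have $(1+s/c)^{-p}\ge 2^{-p}$ on $[0,t]$. However, the cleaner route is through concavity. The map $t\mapsto \frac{xc}{p-1}\bigl(1-(1+t/c)^{1-p}\bigr)$ is concave in $t$ with value $0$ at $t=0$, so on $[0,c]$ it lies above its chord:
\[
\frac{xc}{p-1}\Bigl(1-(1+t/c)^{1-p}\Bigr)\ \ge\ \frac{t}{c}\cdot\frac{xc}{p-1}\bigl(1-2^{1-p}\bigr)=k_p x\,t .
\]
Hence $\Phi(t,x,c)\ge(\mu+k_p x)t$ for $t\in[0,c]$. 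For $t\ge c$, the term $\mu t$ is still present, and the integral term is increasing and at least $k_p x c$ at $t=c$. Together with $\mu t\ge \mu c$, this gives $\Phi(t,x,c)\ge (\mu+k_px)c+\mu(t-c)$.

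\emph{Integration.} Split $\int_0^\infty=\int_0^c+\int_c^\infty$ and set $\beta=\mu+k_p x$. On $[0,c]$, monotonicity of $\bar F_\varepsilon$ gives
\[
\int_0^c\bar F_\varepsilon(\beta t)\,\D t\le \frac1\beta\int_0^\infty \bar F_\varepsilon(u)\,\D u=\frac{\mu_\varepsilon}{\beta},
\]
using $\int_0^\infty\bar F_\varepsilon=\mu_\varepsilon$ since $\varepsilon\ge0$. On $[c,\infty)$ we get
\[
\int_c^\infty \bar F_\varepsilon\bigl(\beta c+\mu(t-c)\bigr)\,\D t=\frac1\mu\int_{\beta c}^\infty\bar F_\varepsilon(u)\,\D u=\frac{I(\beta c)}{\mu}.
\]
Adding the two pieces yields the first inequality.

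\emph{Case statement.} For $x\ge x_*$ the first case is just the general bound. For $0\le x<x_*$ we use $\beta\ge\mu$ together with the fact that $I$ is nonincreasing in its argument, so that $\mu_\varepsilon/\beta\le\mu_\varepsilon/\mu$ and $I(\beta c)\le I(\mu c)$. (Equivalently, one can drop the $x$-term in $\Phi$ altogether.) The case $x=0$ is handled the same way, since $\Phi=\mu t$ there.

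The main obstacle is finding the right lower bound for $\Phi$ on $[0,c]$ that produces exactly the constant $k_p$. The chord/concavity argument is the step I expect to need care. If it is awkward to state, I would fall back on an alternative: show that $g(t)=\bigl(1-(1+t/c)^{1-p}\bigr)/t$ is decreasing, which gives $g(t)\ge g(c)$ for $t\le c$ and hence the same bound.
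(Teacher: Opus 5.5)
Your proposal is correct and follows the paper's proof step for step. Both use the chord (concavity) bound $\Phi(t,x,c)\ge(\mu+k_px)t$ on $[0,c]$ and the bound $\Phi(t,x,c)\ge(\mu+k_px)c+\mu(t-c)$ on $[c,\infty)$, whose substitutions give $\mu_\varepsilon/(\mu+k_px)$ and $I((\mu+k_px)c)/\mu$. The only cosmetic differences are these: in the case $0\le x<x_*$ the paper re-bounds the tail directly with $\Phi(t,x,c)\ge\mu t$, whereas you use the monotonicity of $I$, which is equivalent; and on $[c,\infty)$ the step you actually need is the identity $\mu t=\mu c+\mu(t-c)$, not the inequality $\mu t\ge\mu c$.
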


\begin{proof}
	Recall that
	\[
	\mathbb E_{x,c}[\tau_n]
	=
	\int_0^\infty \bar{F}_{\varepsilon}(\Phi(t;x,c))\, \D t
	\]
	where
	\[
	\Phi(t;x,c)
	=
	\mu t
	+
	\frac{x c}{p-1}
	\left[
	1-\left(1+\frac{t}{c}\right)^{1-p}
	\right].
	\]
	Since \(\Phi(\cdot\,;x,c)\) is concave and satisfies \(\Phi(0;x,c)=0\), it lies above the secant line connecting \((0,0)\) and \((c,\Phi(c;x,c))\) on the interval \([0,c]\).
	Evaluating at \(t=c\), we obtain
	\[
	\Phi(c;x,c) = \mu c + \frac{xc}{p-1}(1 - 2^{1-p}) = c(\mu + k_p x).
	\]
	Therefore, for \(0 \le t \le c\), 
	\begin{equation}
	\Phi(t;x,c) \ge \frac{\Phi(c;x,c)}{c} t = (\mu + k_p x)t. \label{Eq:Phi_B1}
	\end{equation}
	Since $\bar{F}_{\varepsilon}$ is non-increasing, we have
	\begin{equation}
		\int_0^c \bar{F}_{\varepsilon}(\Phi(t;x,c))\, \D t \le
		\int_0^c \bar{F}_{\varepsilon}((\mu+ k_p x)t)\, \D t \le \int_0^\infty \bar{F}_{\varepsilon}((\mu+ k_p x)t)\, \D t = \frac{\mu_{\varepsilon}}{\mu + k_p x}. \label{Eq:bound1_gen}
	\end{equation}
	For $t\ge c$, the intensity is bounded below by $\mu$, which implies
	\begin{equation}
	\Phi(t;x,c)
	=
	\Phi(c;x,c)+\int_c^t \lambda_{n-1}(s;x,c)\,\D s
	\ge
	\Phi(c;x,c)+\mu(t-c) = c(\mu + k_p x) + \mu(t-c). \label{Eq:Phi_B2}
	\end{equation}
	Consequently, using the substitution $u = \Phi(c) + \mu(t-c)$,
	\begin{equation}
		\int_c^\infty \bar{F}_{\varepsilon}(\Phi(t;x,c))\,\D t
		\le \int_c^\infty \bar{F}_{\varepsilon}(\Phi(c) + \mu(t-c))\,\D t = \frac{1}{\mu} \int_{\Phi(c)}^\infty \bar{F}_{\varepsilon}(u) du = \frac{I(c(\mu+k_p x))}{\mu}. \label{Eq:bound2_gen}
	\end{equation}
	Alternatively, for \(t \ge c\), since $\Phi(t) \ge \mu t$,
	\begin{equation}
		\int_c^\infty \bar{F}_{\varepsilon}(\Phi(t;x,c))\,\D t
		\le
		\int_c^\infty \bar{F}_{\varepsilon}(\mu t)\,\D t
		=
		\frac{I(\mu c)}{\mu}. \label{Eq:bound3_gen}
	\end{equation}
	For any fixed $x_*>0$, if $0\le x<x_*$, 
	combining the two bounds \eqref{Eq:bound1_gen} and \eqref{Eq:bound3_gen} 
	yields
	\[
	\mathbb E_{x,c}[\tau_n]
	\le \frac{\mu_{\varepsilon}}{\mu + k_p x} + \frac{I(\mu c)}{\mu} \le
	\frac{\mu_{\varepsilon}}{\mu} + \frac{I(\mu c)}{\mu},
	\]
	If $x > x_*$, combining the two bounds \eqref{Eq:bound1_gen} and \eqref{Eq:bound2_gen} 
	yields
	\[
	\mathbb E_{x,c}[\tau_n]
	\le
	\frac{\mu_{\varepsilon}}{\mu+ k_p x} + \frac{I(c(\mu+k_p x))}{\mu}.
	\]
\end{proof}

\begin{proposition}[Negative drift of $c_n$]\label{Prop:drift_c}
	There exists $x_* > 0$ such that the following statements hold.
	\begin{enumerate}
		\item For $0 < x < x_*$, if 
		\begin{equation}
			c > C_1 
			:= \frac{2\mu \gamma + 3 \mu_{\varepsilon} + \sqrt{(2\mu \gamma + 3 \mu_{\varepsilon})^2 
					+ \frac{24  \mu \mu_{\varepsilon} \gamma x_*}{\xi}}}{4\mu}, \label{Eq:C1}
		\end{equation}
		then $\mathbb{E}_{x,c}[c_n] - c < 0$.
		
		\item For $x \ge x_*$, if 
		\begin{equation}
			c > C_2 
			:= \frac{\gamma + \sqrt{\gamma^2 + \frac{3 \mu_{\varepsilon} \gamma}{k_p \xi}}}{2}, \label{Eq:C2}
		\end{equation}
		then $\mathbb{E}_{x,c}[c_n] - c < 0$.
	\end{enumerate}
\end{proposition}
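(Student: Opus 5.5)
The plan is to avoid differentiating the update in expectation and instead rewrite $c_n$ as a convex combination. Fix $X_{n-1}=x$, $c_{n-1}=c$, write $r=1+\tau_n/c$ and $A=x r^{-p}$. Then the numerator of $\Psi_c$ is $A+\xi$ and the denominator is $A/(cr)+\xi/\gamma$. Hence $c_n$ is a mediant of the two ratios $cr=A/(A/(cr))$ and $\gamma=\xi/(\xi/\gamma)$:
\[
c_n = w\,(c+\tau_n) + (1-w)\,\gamma, \qquad w=\frac{A/(cr)}{A/(cr)+\xi/\gamma}\in(0,1).
\]
Consequently
\[
c_n-c = w\,\tau_n-(1-w)(c-\gamma),
\]
and the drift splits into a positive part $\mathbb E_{x,c}[w\tau_n]$ and a restoring part $(c-\gamma)\mathbb E_{x,c}[1-w]$, which is useful once $c>\gamma$. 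Since $r\ge1$, I would use the elementary bounds
\[
w\le \frac{\gamma x}{\xi c}\, r^{-(p+1)}\le \frac{\gamma x}{\xi c},
\qquad
1-w\ge 1-\frac{\gamma x}{\xi c}.
\]
This gives the basic inequality
\[
\mathbb E_{x,c}[c_n]-c\le \frac{\gamma x}{\xi c}\,\mathbb E_{x,c}[\tau_n]-\Big(1-\frac{\gamma x}{\xi c}\Big)(c-\gamma).
\]
Everything then reduces to controlling $\mathbb E_{x,c}[\tau_n]$, which is exactly what Lemma~\ref{lem:mean-tau-bound} provides.

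\emph{Case $0<x<x_*$.} I would use the second branch of Lemma~\ref{lem:mean-tau-bound} together with $I(\mu c)\le I(0)=\mu_\varepsilon$, so that $\mathbb E_{x,c}[\tau_n]\le 2\mu_\varepsilon/\mu$. I would also replace $x$ by $x_*$ in both the positive and the restoring terms. Multiplying through by $c$ (and by $\mu$) leaves a quadratic inequality of the form
\[
\mu c^2-(\text{const})\,c-(\text{const})\,\frac{\gamma x_*\mu_\varepsilon}{\xi}>0 .
\]
The precise coefficients $2\mu\gamma+3\mu_\varepsilon$ and $24\mu\mu_\varepsilon\gamma x_*/\xi$ in \eqref{Eq:C1} depend on how the $\gamma x_*/(\xi c)$ cross terms are absorbed, and I would arrange these bookkeeping choices so that the largest root is $C_1$. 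Negativity of the drift for $c>C_1$ is then immediate.

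\emph{Case $x\ge x_*$.} Here the factor $\gamma x/(\xi c)$ grows with $x$, so the crude bound $1-w\ge 1-\gamma x/(\xi c)$ is useless. Instead I would keep the decay inside $w$, namely $w\tau_n\le \frac{\gamma x}{\xi}(r-1)r^{-(p+1)}$, and use the first branch of Lemma~\ref{lem:mean-tau-bound}. In that branch the term $\mu_\varepsilon/(\mu+k_px)\le \mu_\varepsilon/(k_p x)$ cancels the factor $x$, producing the constant $\gamma\mu_\varepsilon/(k_p\xi)$ seen in \eqref{Eq:C2}. The tail term $x\,I((\mu+k_px)c)/\mu$ must also be bounded uniformly. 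I would use $I(K)=\mathbb E[(\varepsilon-K)^+]\le \mathbb E[\varepsilon^2]/K$, which is where $\mathbb E[\varepsilon^2]<\infty$ enters. I would then choose $x_*$ large enough that this term is at most a further multiple of $\mu_\varepsilon\gamma/(k_p\xi)$, which accounts for the factor $3$. For the restoring term I would lower-bound $1-w$ on the event that $\tau_n$ is not small, or more simply combine the two terms as a quadratic $c^2-\gamma c-\tfrac{3\mu_\varepsilon\gamma}{4k_p\xi}>0$, whose positive root is $C_2$.

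The main obstacle is this second case. There the weight $w$ is not uniformly small in $x$, so the restoring term has to be extracted carefully. I would first try to bound $\mathbb E[(1-w)(c-\gamma)]$ from below by using that $w\le \gamma x r^{-(p+1)}/(\xi c)$ decays in $r$. Failing that, I would restrict to $\{\tau_n\ge c\}$ and use the lower bound \eqref{Eq:Phi_B1}–\eqref{Eq:Phi_B2} on $\Phi$ to control the complementary probability. Fixing $x_*$ consistently across both cases (case 1 only needs it finite) is a minor point.
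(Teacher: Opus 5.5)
Your identity $c_n-c=w\tau_n-(1-w)(c-\gamma)$ is exactly the paper's starting formula: the numerator $\frac{x\tau_n}{c}r_n^{-(p+1)}+\xi(1-c/\gamma)$ divided by $D$. So the setup is fine. The gap is in the bounds you then put on $w$ and $1-w$. The linearized bounds $w\le\frac{\gamma x}{\xi c}$ and $1-w\ge 1-\frac{\gamma x}{\xi c}$ carry information only when $\gamma x<\xi c$, and this breaks both cases.

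\textbf{Case 2 fails.} For large $x$ one has $\tau_n\approx\varepsilon_n/x$, so $r_n\approx 1$, $w\to 1$ and $1-w\approx \xi c/(\gamma x)$. Hence $(c-\gamma)\,\mathbb E_{x,c}[1-w]\to 0$ as $x\to\infty$. Your bound on the positive part, $\frac{\gamma x}{\xi c}\mathbb E_{x,c}[\tau_n]$, stays of order one (about $\gamma\mu_\varepsilon/(k_p\xi c)$ via the lemma). No choice of $x_*$ repairs this. Restricting to $\{\tau_n\ge c\}$ does not help either: there $1-w\ge (1+\frac{\gamma x}{\xi c}2^{-(p+1)})^{-1}$ is still $O(1/x)$, and the event has probability at most $\bar F_\varepsilon(c(\mu+k_p x))\to 0$. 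Your quadratic $c^2-\gamma c-\frac{3\mu_\varepsilon\gamma}{4k_p\xi}$ tacitly treats $1-w$ as order one, which is the small-$x$ regime. Even granting that, your main term alone would give $c(c-\gamma)>\gamma\mu_\varepsilon/(k_p\xi)$, not the factor $\frac{3}{4}$.

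\textbf{Case 1 also fails.} The same $x_*$ must serve case 2 and is therefore large, whereas $C_1\approx\sqrt{3\mu_\varepsilon\gamma x_*/(2\mu\xi)}$ grows only like $\sqrt{x_*}$. For $C_1<c\le\gamma x/\xi$ with $x<x_*$, your right-hand side is positive, so nothing is proved there. Moreover, $I(\mu c)\le\mu_\varepsilon$ gives $\mathbb E_{x,c}[\tau_n]\le 2\mu_\varepsilon/\mu$, and even with the correct restoring term this leads to a quadratic whose largest root exceeds $C_1$. The $3\mu_\varepsilon$ in $C_1$ comes from $I(\mu c)\le\mu_\varepsilon/2$. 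That holds for $c$ beyond a fixed threshold, which $C_1$ exceeds once $x_*$ is large.

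\textbf{The fix.} Keep the exact weights: $w\le 1$ and, since $r_n\ge 1$, $1-w=\frac{\xi/\gamma}{D}\ge\frac{c\xi}{\gamma x+c\xi}$. This is the paper's route. It gives $c_n-c\le\tau_n-\frac{c\xi(c-\gamma)}{\gamma x+c\xi}$ for $c>\gamma$, so both terms are $O(1/x)$ for large $x$ and everything reduces to Lemma~\ref{lem:mean-tau-bound}. Case 1 then yields $C_1$ exactly.

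\textbf{A caveat for case 2.} The lemma's constant $k_p$ is too lossy to reach the stated $C_2$. At $c=C_2$ one has $2k_p\xi c(c-\gamma)=\frac{3}{2}\mu_\varepsilon\gamma$. So comparing $\frac{\mu_\varepsilon}{k_p x}$ (let alone $\frac{3\mu_\varepsilon}{2k_p x}$) with $\frac{c\xi(c-\gamma)}{\gamma x}$ fails just above $C_2$. In particular, the paper's claim that $2k_p\xi c(c-\gamma)-3\mu_\varepsilon\gamma>0$ for $c>C_2$ is not correct.

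What does work is a sharper secant. On $[0,sc]$, concavity gives $\Phi(t)\ge(\mu+k_p(s)x)t$ with $k_p(s)=\frac{1}{s}\int_0^s(1+u)^{-p}\,\D u\to 1$ as $s\to 0$. Combine this with $K I(K)\le\mathbb E[\varepsilon^2\mathbf{1}\{\varepsilon>K\}]\to 0$; note that your bound $I(K)\le\mathbb E[\varepsilon^2]/K$ alone does not become small by enlarging $x_*$. Together these yield $\mathbb E_{x,c}[\tau_n]\le(1+\eta)\mu_\varepsilon/x$ for $x\ge x_*$, uniformly in $c\ge\gamma$. Negativity then requires $x\,[c\xi(c-\gamma)-(1+\eta)\gamma\mu_\varepsilon]>(1+\eta)\mu_\varepsilon c\xi$. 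This holds uniformly on $c>C_2$ for large $x_*$, because $C_2\xi(C_2-\gamma)=\frac{3\gamma\mu_\varepsilon}{4k_p}$ and $k_p=\int_0^1(1+u)^{-p}\,\D u<\ln 2<\frac{3}{4}$.
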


\begin{proof}
	From the update formula and using $X_{n-1} = x$ and $c_{n-1} = c$, we have
	\[
	c_n - c
	= \frac{x\,r_n^{-p} + \xi}
	{D} - c = 
	\frac{\frac{x \tau_n}{c} r_n^{-(p+1)} + \xi\left(1-c/\gamma\right)}{D}, \quad\quad D=\frac{x}{c} r_n^{-(p+1)}+\frac{\xi}{\gamma}
	\]
	where $r_n = 1 + \tau_n/c$.
	First, note that 
	\[
	\frac{\frac{x\tau_n}{c} r_n^{-(p+1)}}{D}
	\le
	\frac{\frac{x\tau_n}{c} r_n^{-(p+1)}} {\frac{x}{c} r_n^{-(p+1)}}
	= \tau_n.
	\]	
	Thus,
	\begin{equation}
		c_n - c \leq \tau_n + \frac{ \xi(1-c/\gamma)}{D} \label{Eq:c_tau}
	\end{equation}
	which intuitively suggests that if $c$ is large enough then the expectation of the right-hand side is negative.
	Since 
	$$ D=\frac{x}{c} r_n^{-(p+1)}+\frac{\xi}{\gamma} \leq \frac{x}{c} +\frac{\xi}{\gamma}$$
	and $1  - c/\gamma$ is negative when $c > \gamma$, 
	we have
	$$\frac{\xi(1 - c/\gamma)}{D} \le \frac{\xi(1 - c/\gamma)}{\frac{x}{c} + \frac{\xi}{\gamma}} =  \frac{c\xi(\gamma - c)}{\gamma x + c \xi}$$
	since $r_n \ge 1$.
	
	Taking the conditional expectation of Eq.~\eqref{Eq:c_tau} and applying Lemma~\ref{lem:mean-tau-bound}, for any $x_*>0$, we have
	\begin{equation}\label{Eq:U_bound}
		\mathbb{E}_{x,c}[\,c_n - c\,] \leq  
		\begin{cases}
			U_1(x,c)
			= \dfrac{\mu_{\varepsilon}}{\mu + k_p x}
			+ \dfrac{I(c(\mu + k_p x))}{\mu}
			+ \dfrac{c\xi(\gamma - c)}{\gamma x + c \xi},
			& x \ge x_*, \\[2ex]
			U_2(x,c)
			= \dfrac{\mu_{\varepsilon}}{\mu}
			+ \dfrac{I(\mu c)}{\mu}
			+ \dfrac{c\xi(\gamma - c)}{\gamma x + c \xi},
			& 0 \le x < x_* .
		\end{cases} 
	\end{equation}
	
	For the region $x \ge x_*$, we analyze the tail term $I(c(\mu + k_p x))$. Since $\mathbb{E}[\varepsilon^2] < \infty$ implies $I(K) = o(K^{-1})$, the term $I(c(\mu + k_p x))$ decreases faster than $(\mu + k_p x)^{-1}$. We define $C_3(x_*)$ as the smallest $c \ge \gamma$ such that 
	\begin{equation}
		\frac{I(c(\mu + k_p x_*))}{\mu} \le \frac{\mu_{\varepsilon}}{2(\mu + k_p x_*)}.  \label{Eq:I_star}
	\end{equation}
	Because $I(K)$ is monotonically decreasing, for $c > C_3$ and $x \ge x_*$, we have $I(c(\mu+k_px)) \le I(C_3(\mu+k_px_*))$, ensuring that
	\begin{equation}
		\frac{I(c(\mu + k_p x))}{\mu} \le \frac{\mu_{\varepsilon}}{2(\mu + k_p x)}. \label{Eq:I}
	\end{equation}
	holds throughout the region. 
	Substituting Eq.~\eqref{Eq:I} into $U_1(x, c)$ yields
	\begin{equation}
		U_1(x,c) \le  \frac{3 \mu_{\varepsilon}}{2(\mu + k_p x)} + \frac{c\xi(\gamma - c)}{\gamma x + c \xi} \le \frac{3\mu_{\varepsilon}}{2 k_p x} + \frac{c\xi(\gamma - c)}{\gamma x + c \xi}. \label{Eq:U1_bound}
	\end{equation}
	Furthermore, as $x_*$ increases, the right-hand side of Eq.~\eqref{Eq:I_star} decreases at rate $O(x_*^{-1})$, but the argument of $I$ in the left-hand side of Eq.~\eqref{Eq:I_star} increases, 
	making $C_3(x_*)$ a non-increasing function that can be made arbitrarily close to $\gamma$ for sufficiently large $x_*$.
	
	Furthermore, for
	\[
	c > C_4 := \gamma + \frac{3 \mu_{\varepsilon}}{2 k_p} \left( \frac{1}{x_*} + \frac{1}{\xi} \right),
	\]
	we have the following sequence of inequalities:
	\[
	2 k_p x_* (c - \gamma) > 3 \mu_{\varepsilon} + \frac{3 \mu_{\varepsilon} x_*}{\xi} > 3 \mu_{\varepsilon} + \frac{3 \mu_{\varepsilon} \gamma x_*}{\xi c},
	\]
	where we utilize $c > \gamma$ for the second inequality. This leads to
	\[
	3 \mu_{\varepsilon} c \xi < (2 k_p \xi c (c - \gamma) - 3 \mu_{\varepsilon} \gamma) x_* \le (2 k_p \xi c (c - \gamma) - 3 \mu_{\varepsilon} \gamma) x,
	\]
	since the quadratic term $2 k_p \xi c (c - \gamma) - 3 \mu_{\varepsilon} \gamma$ is positive for all $c > C_2$ in Eq.~\eqref{Eq:C2}. 
	Rearranging these terms, we obtain
	\[
	3 \mu_{\varepsilon} (\gamma x + c \xi) < 2 k_p \xi c (c - \gamma) x \implies \frac{3 \mu_{\varepsilon}}{2 k_p x} < \frac{c \xi (c - \gamma)}{\gamma x + c \xi},
	\]
	and by Eq.~\eqref{Eq:U1_bound}, we have $U_1(x, c) < 0$. 
	Since $\lim_{x_* \to \infty} C_3(x_*) = \gamma$ and $\lim_{x_* \to \infty} C_4(x_*) = \gamma + \frac{3\mu_\varepsilon}{2k_p\xi}$, while $C_2$ remains a constant greater than these limits for large $x_*$, the condition $c > C_2$ is sufficient.
	
	For the region $0 \le x < x_*$, 
	let $C_5$ be the infimum of $c$ such that $I(\mu c) \le \mu_{\varepsilon}/2$. Then, for $c > C_5$,
	\[
	U_2(x,c) \le \frac{3\mu_{\varepsilon}}{2\mu} + \frac{c\gamma\xi(1 - c/\gamma)}{\gamma x_* + c \xi}.
	\]
	Solving the quadratic inequality 
	$$\frac{3\mu_{\varepsilon}}{2\mu} (\gamma x_* + c \xi) + c \gamma \xi (1 - c/\gamma) \leq 0$$ for $c$ yields the threshold $C_1$ in Eq.~\eqref{Eq:C1}. 
	Since $C_1$ grows with $x_*$ while $C_5$ is independent of $x_*$, we can always choose a sufficiently large $x_*$ such that $C_1 > C_5$. 
	Under this choice, for all $c > C_1$, the condition $c > C_5$ is automatically satisfied, ensuring that the conditional drift is strictly negative uniformly over $0 \le x < x_*$.
\end{proof}

\begin{proposition}[Negative drift of $X_n$]\label{Pro:ndX}
	Suppose that the model parameters satisfy 
	\begin{equation}
		\xi < \frac{p}{2 \gamma} \label{Eq:condition1}
	\end{equation}
	If $ c < 2 \gamma$, or equivalently $ c < p/\xi$,
	then there exists $x^* > 0$ such that for all $x > x^*$, the conditional drift of $X_n$ is negative, i.e.,
	$$\mathbb{E}_{x, c}[X_n] - x < 0.$$
\end{proposition}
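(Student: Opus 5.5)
The plan is to write the drift explicitly and reduce the claim to a lower bound on the expected decay. By \eqref{Eq:Psi_x},
\[
\mathbb{E}_{x,c}[X_n]-x = \xi - x\,\mathbb{E}_{x,c}\!\left[1-\left(1+\frac{\tau_n}{c}\right)^{-p}\right],
\]
so it suffices to show
\[
\liminf_{x\to\infty} x\,\mathbb{E}_{x,c}\!\left[1-\left(1+\frac{\tau_n}{c}\right)^{-p}\right] \ge \frac{p\,\mu_\varepsilon}{c}.
\]
Once this holds, the drift is negative for all large $x$ provided $p\mu_\varepsilon/c>\xi$. The stated hypotheses give $c<2\gamma<p/\xi$ by \eqref{Eq:condition1}, so this is exactly the needed condition under the exponential normalisation $\mu_\varepsilon=1$, which is where the threshold $p/\xi$ comes from. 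For general $F_\varepsilon$ the same argument gives the threshold $p\mu_\varepsilon/\xi$, and I would remark on this. The heuristic is that large $x$ forces a short inter-arrival time $\tau_n\approx \varepsilon_n/x$. Then $1-r_n^{-p}\approx p\tau_n/c$, so $x(1-r_n^{-p})\approx p\varepsilon_n/c$.

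\textbf{Step 1 (pointwise limit $x\tau_n\to\varepsilon_n$).} Fix $c$ and a realisation $\varepsilon_n=\varepsilon$, and write $\tau=\tau(x)=\Phi^{-1}(\varepsilon,x,c)$. For the upper bound I use the chord bound \eqref{Eq:Phi_B1} from the proof of Lemma~\ref{lem:mean-tau-bound}. As soon as $\varepsilon/(\mu+k_p x)\le c$, it gives $\tau\le \varepsilon/(\mu+k_p x)$, so $\tau\to 0$ as $x\to\infty$. Next, the rate satisfies $\lambda(s)\ge x(1+s/c)^{-p}$ and is decreasing, so $\varepsilon=\Phi(\tau)\ge x\tau(1+\tau/c)^{-p}$. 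This gives $x\tau\le \varepsilon(1+\tau/c)^{p}\to\varepsilon$. Conversely, $\lambda(s)\le \mu+x$ gives $\varepsilon\le(\mu+x)\tau$, so $x\tau\ge \varepsilon x/(\mu+x)\to\varepsilon$. Hence $x\tau\to\varepsilon$.

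Combining this with $\tau\to0$ and the expansion $1-(1+u)^{-p}=pu+O(u^2)$, I obtain
\[
x\left(1-\left(1+\frac{\tau}{c}\right)^{-p}\right)\to \frac{p\varepsilon}{c}
\]
pointwise in $\varepsilon$.

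\textbf{Step 2 (passing to expectations).} The integrand $x(1-r_n^{-p})$ is nonnegative, so Fatou's lemma gives the desired $\liminf$ bound directly. No domination is needed, which matters because uniform integrability in $x$ of $x(1-r_n^{-p})$ on the event $\{\tau_n>c\}$ is not obvious. Choosing $x^*$ so that $x\,\mathbb{E}_{x,c}[1-r_n^{-p}]>\xi$ for all $x>x^*$ then completes the proof.

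The main obstacle is Step 1, specifically obtaining two-sided control of $x\tau$ from the implicit relation $\Phi(\tau,x,c)=\varepsilon$. The sandwich $x\tau(1+\tau/c)^{-p}\le\Phi(\tau)\le(\mu+x)\tau$, together with \eqref{Eq:Phi_B1} to force $\tau\to0$, is what I would rely on. A secondary point is that $x^*$ depends on $c$. If uniformity over $c$ in a compact subinterval of $(0,p\mu_\varepsilon/\xi)$ were required, I would replace Fatou by the explicit inequality $1-(1+u)^{-p}\ge pu(1+u)^{-p-1}$ and the lower bound $x\tau_n\ge \varepsilon_n x/(\mu+x)$, on the event $\{\varepsilon_n\le K\}$.
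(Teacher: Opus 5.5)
Your proof is correct, and it takes a different route from the paper.

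\textbf{The paper's route.} It uses the quadratic bound $(1+u)^{-p}\le 1-pu+\tfrac{p(p+1)}{2}u^2$, which reduces the drift to $\xi-\tfrac{px}{c}\mathbb{E}_{x,c}[\tau_n]+\tfrac{xp(p+1)}{2c^2}\mathbb{E}_{x,c}[\tau_n^2]$. It then bounds the first moment below by $\mu_\varepsilon/(\mu+x)$, using $\Phi(t)\le(\mu+x)t$. It bounds the second moment above by splitting at $t=c$ and using the chord and linear lower bounds on $\Phi$.

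\textbf{Your route.} You prove the pointwise limit $x(1-r_n^{-p})\to p\varepsilon_n/c$ from the two-sided sandwich on $\Phi(\tau)$. Nonnegativity and Fatou then finish the argument.

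\textbf{What yours buys.} It needs only $\mu_\varepsilon<\infty$ and identifies the sharp asymptotic threshold $p\mu_\varepsilon/c>\xi$. It also avoids the weak point of the Taylor route. The quadratic bound is lossy on the rare event $\{\tau_n>c\}$, where in fact $0\le x(1-r_n^{-p})\le x$. So the paper must show $x\,\mathbb{E}_{x,c}[\tau_n^2]\to0$. To do this it asserts $\int_{K_x}^\infty(u-K_x)\bar F_\varepsilon(u)\,\D u=o(K_x^{-1})$, with $K_x=c(\mu+k_px)$. But this integral equals $\tfrac12\mathbb{E}[(\varepsilon-K_x)_+^2]$, and a finite $\mathbb{E}[\varepsilon^2]$ only makes it $o(1)$. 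For a Pareto-type tail $\bar F_\varepsilon(u)\sim u^{-5/2}$ one even has $x\,\mathbb{E}_{x,c}[\tau_n^2]\to\infty$, while your argument is unaffected.

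\textbf{What the paper's buys.} It gives explicit inequalities whose dependence on $c$ is visible. This matters because the Harris-recurrence proof applies this proposition uniformly over $c\in[\gamma,C_2]$. Plain Fatou gives an $x^*$ that depends on $c$. Your closing remark supplies the uniformity: use the mean-value bound $1-(1+u)^{-p}\ge pu(1+u)^{-p-1}$ on $\{\varepsilon_n\le K\}$ together with $x\tau_n\ge\varepsilon_n x/(\mu+x)$. That remark could serve as the main proof.

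\textbf{Normalisation.} Your remark on normalisation is also accurate. The paper's final rewriting to $-\delta\mu_\varepsilon\tfrac{x}{\mu+x}+\tfrac{\xi\mu}{\mu+x}$ silently takes $\mu_\varepsilon=1$. For $\mu_\varepsilon<\xi c/p$ the asymptotic drift equals $\xi-p\mu_\varepsilon/c>0$, so the statement as written fails without that normalisation.
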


\begin{proof}
	By the second-order Taylor expansion, 
	$$(1+u)^{-p} \le 1 - pu + \frac{p(p+1)}{2}u^2,$$
	for all $u\ge0$ and substituting $u = \tau_n/c$, the change in $X$ is bounded by 
	\begin{equation}\label{eq:drift_base_general}
		\mathbb{E}_{x,c}[X_n - x] \le \xi - \frac{px}{c} \mathbb{E}_{x,c}[\tau_n] + \frac{xp(p+1)}{2c^2} \mathbb{E}_{x,c}[\tau_n^2].
	\end{equation}
	To analyze the moments of $\tau_n$ with $F_\varepsilon$, we use the relations Eqs.~\eqref{Eq:CE_tau}~and~\eqref{Eq:CM_tau}. 
	For the first moment, the concavity of $\Phi$ implies $\Phi(t) \le (\mu + x)t$, thus
	\begin{equation}
		\mathbb{E}_{x,c}[\tau_n] = \int_0^\infty \bar{F}_\varepsilon(\Phi(t,x,c)) \D t \ge \int_0^\infty \bar{F}_\varepsilon((\mu + x)t) \D t = \frac{\mu_{\varepsilon}}{\mu + x}.
	\end{equation}
	Substituting this into the second term of \eqref{eq:drift_base_general} gives $$-\frac{px}{c} \mathbb{E}_{x,c}[\tau_n] \le -\frac{p}{c} \left(\frac{\mu_{\varepsilon} x}{\mu + x}\right).$$ 
	
	For the second moment, applying the linear lower bounds of $\Phi(t)$ from Eqs.~\eqref{Eq:Phi_B1}~and~\eqref{Eq:Phi_B2}, we have
	\[
	\Phi(t,x,c)\ge
	\begin{cases}
		(\mu+k_p x)t, & 0\le t\le c,\\
		K_x +\mu(t-c), & t>c,
	\end{cases}
	\]
	where $K_x = c(\mu + k_p x)$. 
	Splitting the integral yields
	\begin{equation}
		\mathbb{E}_{x,c}[\tau_n^2] \le \int_0^c 2t \bar{F}_\varepsilon((\mu + k_p x)t) \D t + \int_c^\infty 2t \bar{F}_\varepsilon(K_x + \mu(t-c)) \D t.
	\end{equation}
	Applying change of variables $u = (\mu + k_p x)t$ to the first integral and $u = K_x + \mu(t-c)$ to the second, we obtain
	\begin{equation}
		\mathbb{E}_{x,c}[\tau_n^2] \le \frac{\int_0^{K_x} 2u \bar{F}_\varepsilon(u) \D u}{(\mu + k_p x)^2} + \frac{2c}{\mu} I(K_x) + \frac{2}{\mu^2} \int_{K_x}^\infty (u - K_x) \bar{F}_\varepsilon(u) \D u.
	\end{equation}
	Since $\mathbb{E}[\varepsilon^2] < \infty$, the first term is $\mathcal{O}(x^{-2})$. For the tail terms, the finiteness of the second moment ensures that 
	$$I(K_x) = o\left(K_x^{-1}\right), \quad\quad \int_{K_x}^\infty (u - K_x) \bar{F}_\varepsilon(u) \D u = o(K_x^{-1}).$$
	Consequently, the entire second moment term in the drift equation satisfies:
	\begin{equation}
		\frac{xp(p+1)}{2c^2} \mathbb{E}_{x,c}[\tau_n^2] = x \cdot \mathcal{O}(x^{-2}) + x \cdot o(x^{-1}) = \mathcal{O}(x^{-1}).
	\end{equation}
	Combining these, the drift satisfies:
	\begin{equation}
		\mathbb{E}_{x,c}[X_n - x] \le \xi - \frac{p \mu_{\varepsilon}}{c} \left( \frac{x}{\mu + x} \right) + \mathcal{O}(x^{-1}).
	\end{equation}
	Since $ c < 2 \gamma$, and by Eq.~\eqref{Eq:condition1}, 
	$$  -\frac{p}{c} < - \frac{p}{2\gamma}  = - \xi  - \delta, \quad\quad
	\delta = \frac{p}{2\gamma}-\xi >0.$$
	Thus, we rewrite the drift as
	$$ \mathbb{E}_{x,c}[X_n] - x < -\delta \mu_{\varepsilon} \left( \frac{x}{\mu + x} \right) +    \frac{\xi \mu}{\mu + x} + \mathcal O(x^{-1}).$$	
	Therefore, there exists \(x^*>0\) such that the drift is strictly negative for all \(x>x^*\).
\end{proof}

\begin{assumption}[Stability condition]\label{Ass:stability}
	The previous propositions provide useful guidance toward establishing sufficient conditions for stability.
	We assume that the model parameters satisfy
	\begin{equation}
		\xi < \frac{p}{2\gamma}
		\quad \text{and} \quad
		\xi < \frac{p}{C_2},
		\qquad
		C_2 = \frac{\gamma + \sqrt{\gamma^2 + \frac{3\gamma}{k_p \xi}}}{2},
		\qquad
		k_p = \frac{1-2^{\,1-p}}{p-1}.
		\label{Eq:condition}
	\end{equation}
	Equivalently, this condition can be written as
	\begin{equation}
		\xi < \min\!\left\{
		\frac{p}{2 \gamma},
		\frac{4p^2}{\gamma \left( 4p + \frac{3}{k_p} \right)} \label{Eq:condition2}
		\right\}.
	\end{equation}
	In particular, the binding constraint depends on the value of $p$:
	\begin{equation}
		\xi <
		\begin{cases}
			\displaystyle \frac{p}{2\gamma}, & \text{if } k_p p > 0.75, \\[10pt]
			\displaystyle \frac{4p^2}{\gamma \left( 4p + \frac{3}{k_p} \right)},
			& \text{if } k_p p < 0.75,
		\end{cases}
	\end{equation}
	where the critical point is approximately $p \approx 1.132$.
\end{assumption}

\section{Markov chain}\label{Sect:MC}

This section examines the process ${(X_n, c_n)}_{n \ge 0}$ as a general state-space Markov chain.
The state space of the chain is defined as
\[
\mathcal{S} = [\xi, \infty) \times [\gamma, \infty).
\]
For any current state $(x, c) \in \mathcal{S}$ and any Borel set $B \subseteq \mathcal{S}$,
the transition dynamics are characterized by the transition kernel \( P(x, c, B) \), given by
\begin{equation}
	P(x, c, B)
	= \mathbb{P} \left(
	\left( \xi + x r^{-p},\;
	\frac{\,\xi + x r^{-p}}
	{\,\xi/\gamma + (x/c)\,r^{-(p+1)}\,} \right)
	\in B
	\right),
	\label{Eq:P}
\end{equation}
where
\[
\tau = \Phi^{-1}(\varepsilon; x, c), \qquad
r = 1 + \frac{\tau}{c}, \qquad
\varepsilon \sim F_\varepsilon.
\]

\subsection{NSS and CM}

Our process $(X_n,c_n)$ is cast as a nonlinear state-space model, $\mathrm{NSS}(\Psi)$,
\[
(X_{n+1}, c_{n+1}) = \Psi (X_n, c_n, \tau_{n+1})
\]
where $\Psi$ is defined by Eqs.~\eqref{Eq:Psi},~\eqref{Eq:Psi_x} and ~\eqref{Eq:Psi_c}.
Since $\Phi(\cdot, \mathbf{X}_n)$ is strictly increasing, $\tau_{n+1}$ uniquely captures the new randomness introduced at each step, making it a valid substitute for $\varepsilon_{n+1}$ in the state-space representation.
Note that $\Psi$ is smooth in $X$, $c$ and $\tau$. 
For every $k$, the marginal law of disturbance $\tau_k$ is supported on $O_\tau=(0,\infty)\subset\mathbb R$.
We associate to $\mathrm{NSS}(\Psi)$ the controlled deterministic system
$$(X_k, c_k)=\Psi_k(x,c,u_{1:k})=\Psi\left(\Psi_{k-1}(x,c,u_{1:k-1}), u_k \right),$$
with deterministic control sequence $u_{1:k}=(u_1,\dots,u_k)$ taking values in $O_\tau$. 
We refer to this deterministic mapping as $\mathrm{CM}(\Psi)$.

We rely on standard definitions and results from \cite{meyn2009markov}, which are stated without proof; results specific to our model are proved in this paper.

\begin{definition}[Invariant set]
	Define $A_+(E)$ to be the set of all states attainable by $\mathrm{CM}(F)$ starting from any point in $E \subseteq \mathcal S$ at some time $k \geq 0$.
	If $E \subset A_+(E)$, then $E$ is called invariant.
\end{definition}

\begin{definition}[Minimal set]
	An invariant set $M$ is called a minimal set if it contains no proper closed invariant subsets. 
	That is, for any closed invariant set $E \subseteq M$, we have $E = M$.
\end{definition}

\begin{definition}[Indecomposable]
	A control system is said to be indecomposable if the state space $\mathcal S$ cannot be partitioned into two nonempty disjoint closed invariant sets.
	This implies that if a minimal set exists, it is unique.
\end{definition}

\begin{definition}[$M$-irreducibility]
	The control system is $M$-irreducible if it is indecomposable and possesses a unique minimal set $M$.
\end{definition}

\begin{definition}[Global attracting state]
	A state $x^{*} \in \mathcal S$ is called global attracting if for all $y \in \mathcal S$, we have $x^{*}\in \overline{A_{+}(y)}$.
\end{definition}

\begin{theorem}
	The nonlinear control system is $M$-irreducible if and only if a global attracting state exists.
\end{theorem}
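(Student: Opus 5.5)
This is the standard result of Meyn and Tweedie (Proposition 7.2.5 of \cite{meyn2009markov}). The paper states such results without proof, so I would reproduce the classical argument. It rests on two facts about the control model: $A_+$ is monotone, and it is transitive, meaning $A_+(A_+(E)) = A_+(E)$. Because $\Psi$ is continuous, the same transitivity holds for closures: if $y \in \overline{A_+(x)}$, then $\overline{A_+(y)} \subseteq \overline{A_+(x)}$. I would prove this first by approximating points reached from $y$ by points reached from nearby states $y' \in A_+(x)$, using continuity of $\Psi_k(\cdot,u_{1:k})$ for fixed controls. A second consequence is that for every $x$ the set $\overline{A_+(x)}$ is closed and invariant.

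\emph{Sufficiency.} Let $x^*$ be globally attracting and set $M := \overline{A_+(x^*)}$. By the facts above, $M$ is closed and invariant. To see that it is minimal, take any nonempty closed invariant $E \subseteq M$ and some $y \in E$. Invariance and closedness give $\overline{A_+(y)} \subseteq E$. Since $x^* \in \overline{A_+(y)}$, closure-transitivity gives $M = \overline{A_+(x^*)} \subseteq \overline{A_+(y)} \subseteq E$, so $E = M$. For indecomposability, suppose $\mathcal S = E_1 \cup E_2$ with $E_1, E_2$ disjoint, nonempty, closed and invariant. Any $y \in E_i$ satisfies $\overline{A_+(y)} \subseteq E_i$, so $x^* \in E_1 \cap E_2$, a contradiction. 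The same argument shows that every closed invariant set contains $M$, which gives uniqueness of the minimal set.

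\emph{Necessity.} Assume the system is indecomposable with unique minimal set $M$. Pick any $x^* \in M$. For an arbitrary $y$, the set $\overline{A_+(y)}$ is closed and invariant. The main obstacle is showing that it contains a minimal set, and here I would argue by Zorn's lemma. Order the nonempty closed invariant subsets of $\overline{A_+(y)}$ by reverse inclusion. A chain has a nonempty intersection provided we can work in a compact region, and for the general $\mathcal S$ the classical theory does this by a compactness assumption, or by restricting to orbit closures. I would invoke the Meyn and Tweedie formulation here rather than reprove it. Once this step is in place, uniqueness of $M$ forces $M \subseteq \overline{A_+(y)}$, so $x^* \in \overline{A_+(y)}$ and $x^*$ is globally attracting.

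I expect the existence of a minimal set inside each closed invariant set (the Zorn/compactness step) to be the only delicate point. For this model, the drift results in Propositions~\ref{Prop:drift_c} and~\ref{Pro:ndX} suggest that trajectories can be steered into a bounded region, which supplies the needed compactness.
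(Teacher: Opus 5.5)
Your sufficiency direction is correct and standard. Closure-transitivity, which follows from continuity of each $\Psi_k(\cdot,u_{1:k})$, shows that $M=\overline{A_+(x^*)}$ is closed, invariant, minimal, and contained in every nonempty closed invariant set. This is the only direction the paper uses: it deduces $M$-irreducibility from the attracting state $(\xi,\gamma)$. The paper gives no proof of its own and defers to Meyn--Tweedie.

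The necessity direction has a genuine gap.
\begin{itemize}
\item You reduce it to finding a minimal set inside $\overline{A_+(y)}$ via Zorn's lemma. Without compactness, however, a chain of nonempty closed invariant sets can have empty intersection.
\item In this model no nonempty invariant set is compact. From any state, choosing each control small enough gives $X_k\ge X_{k-1}+\xi/2$, so $X_k\to\infty$.
\item Propositions~\ref{Prop:drift_c} and~\ref{Pro:ndX} bound conditional expectations of the random chain. They say nothing about deterministic reachable sets, so they cannot supply the compactness you need.
\item Your necessity argument never uses indecomposability, which is precisely the hypothesis that should do the work.
\end{itemize}

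The missing idea is to read indecomposability as in Meyn--Tweedie: $\mathcal S$ contains no two disjoint nonempty closed invariant sets. Then for every $y$, the set $\overline{A_+(y)}\cap M$ is:
\begin{itemize}
\item closed and invariant;
\item nonempty, since otherwise $M$ and $\overline{A_+(y)}$ would be disjoint closed invariant sets;
\item contained in $M$, hence equal to $M$ by minimality.
\end{itemize}
So every $x^*\in M$ is globally attracting, with no Zorn's lemma or compactness needed.

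Under the paper's weaker wording (``cannot be partitioned''), indecomposability is vacuous on the connected space $\mathcal S$, and the implication fails in general. Take $x_{k+1}=x_k+x_k^2$ on $\mathbb R$. Then $\{0\}$ is the unique minimal set and $\mathbb R$ is connected. Yet $\overline{A_+(1)}\subset[1,\infty)$ and $\overline{A_+(-1/2)}\subset[-1/2,0]$ are disjoint, so no state is globally attracting.

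Your sufficiency argument already yields the stronger notion, since any two nonempty closed invariant sets both contain $x^*$. Adopting that definition makes both directions work.
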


\begin{lemma}
	The point $(\xi, \gamma)$ is a global attracting state of $\mathrm{CM}(\Psi)$.
	Thus, $\mathrm{CM}(\Psi)$ is $M$-irreducible.
\end{lemma}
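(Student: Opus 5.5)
Our plan is to show directly that from any initial state $(x,c)\in\mathcal S$ we can choose controls $u_1,u_2,\dots\in O_\tau=(0,\infty)$ so that $\Psi_k(x,c,u_{1:k})\to(\xi,\gamma)$ as $k\to\infty$. This places $(\xi,\gamma)$ in $\overline{A_+(x,c)}$, and the preceding theorem then gives $M$-irreducibility. The natural choice is a sequence of very long inter-arrival times. These kill the memory term $x r^{-p}$ and leave essentially only the fresh excitation, whose state is exactly $(\xi,\gamma)$.

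\textbf{Step 1 (one-step limit).} Fix $(x,c)$ with $x\ge\xi$ and $c\ge\gamma$. Letting $t\to\infty$, we have $r=1+t/c\to\infty$, so $x r^{-p}\to 0$ and $(x/c)r^{-(p+1)}\to0$. By Eqs.~\eqref{Eq:Psi_x}--\eqref{Eq:Psi_c},
\[
\Psi_x(x,c,t)\to\xi,\qquad \Psi_c(x,c,t)\to\frac{\xi}{\xi/\gamma}=\gamma .
\]
Both limits hold because $\Psi$ is continuous in $t$ and the denominator of $\Psi_c$ stays above $\xi/\gamma>0$. Hence for every $\epsilon>0$ there is $t$ with $\|\Psi(x,c,t)-(\xi,\gamma)\|<\epsilon$. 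This already shows $(\xi,\gamma)\in\overline{A_+(x,c)}$ after a single step. We would record this by taking $u_k=k$, or any sequence tending to infinity, so as to produce an explicit sequence of attainable points converging to $(\xi,\gamma)$.

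\textbf{Step 2 (state space is respected).} We check that $\Psi$ maps $\mathcal S$ into itself, so that attainable points lie in $\mathcal S$ and the closure is taken there. The bound $\Psi_x\ge\xi$ is immediate. For $\Psi_c\ge\gamma$, write $\Psi_c-\gamma$ as in the proof of Proposition~\ref{Prop:drift_c}: its numerator equals
\[
x r^{-(p+1)}\left(r - \tfrac{\gamma}{c}\right),
\]
which is nonnegative since $r\ge1\ge\gamma/c$. Note also that $(\xi,\gamma)$ itself is not attained by any finite control, since $x r^{-p}>0$. This is precisely why the closure appears in the definition of a global attracting state.

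\textbf{Step 3 (conclusion).} Since $(x,c)$ was arbitrary, $(\xi,\gamma)$ is globally attracting, and the theorem stated just before the lemma yields $M$-irreducibility.

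The main (mild) obstacle is being careful about uniformity and the closure. The limit in Step 1 is pointwise in the initial state, but that is all the definition requires, so a single long control suffices and no uniform estimate is needed. The only real subtlety is verifying the invariance of $\mathcal S$ in Step 2, specifically the inequality $c_n\ge\gamma$.
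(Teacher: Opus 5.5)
Your proof is correct, but it takes a different route from the paper.

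The paper constructs a multi-step feedback trajectory. It chooses controls so that $r_n = 1+k_n$ with $k_n\uparrow\infty$ growing fast enough. It then unrolls $X_n = X_{n-1}r_n^{-p}+\xi$ into a product/sum representation and shows the products and the sum vanish, so $X_n\to\xi$. The same argument applied to $X_n/c_n = (X_{n-1}/c_{n-1})\,r_n^{-(p+1)}+\xi/\gamma$ gives $X_n/c_n\to\xi/\gamma$, hence $c_n\to\gamma$.

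You instead observe that the one-step reachable set $\{\Psi(x,c,t): t>0\}$ already accumulates at $(\xi,\gamma)$ as $t\to\infty$. That is all the definition $x^*\in\overline{A_+(y)}$ requires.

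What each route buys:
\begin{itemize}
\item Your argument is shorter and avoids the bookkeeping on the growth of $k_n$.
\item It is exactly the fact ($F_u(x,c)\to(\xi,\gamma)$ as $u\to\infty$) that the paper invokes later in its aperiodicity lemma, so it ties the two results together.
\item The paper's construction gives somewhat more: a single controlled trajectory converging to $(\xi,\gamma)$, i.e.\ attraction in the dynamical sense. This is not needed for $M$-irreducibility.
\end{itemize}

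Your Step~2 is a genuine addition. It checks that $\Psi$ maps $\mathcal S=[\xi,\infty)\times[\gamma,\infty)$ into itself, via
$\Psi_c-\gamma = x r^{-(p+1)}(r-\gamma/c)\big/\bigl((x/c)r^{-(p+1)}+\xi/\gamma\bigr)\ge 0$.
The paper relies on this invariance implicitly but never verifies it.

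Two small presentational fixes:
\begin{itemize}
\item State explicitly that ``$u_k=k$'' means the single control $u_1=k$ with $k\to\infty$, not a control sequence along one trajectory. A multi-step version also works, but it needs the extra boundedness argument you avoid.
\item The identity for $\Psi_c-\gamma$ is your own computation. The proof of Proposition~\ref{Prop:drift_c} treats $c_n-c$, not $c_n-\gamma$.
\end{itemize}
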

\begin{proof}
	For an increasing sequence $k_n>0$ and apply the feedback control
	$$
	u_n = k_n \,c_n \quad\Longrightarrow\quad r_n = 1+\frac{u_n}{c_n} = 1 + k_n 
	$$
	and $r_n \to \infty$ as $n \to \infty$.
	The dynamics of $X_n$ is represented by
	$$
	X_n
	= \xi + \left(\prod_{i=0}^{n-1} r_i^{-p}\right)(X_0-\xi)
	+ \xi\sum_{k=0}^{n-1}\left(\prod_{i=k}^{n-1} r_i^{-p}\right),
	$$
	and choose a sufficiently fast increasing sequence $\{k_n\}$ such that
	\[
	\prod_{i=0}^{n-1} r_i^{-p}\to 0,
	\qquad 
	\sum_{k=0}^{n-1}\prod_{i=k}^{n-1} r_i^{-p} \to 0,
	\]
	then
	$ X_n \rightarrow \xi$.
	Similarly, we have $X_n / c_n \rightarrow \xi/\gamma$, it follows that $c_n = X_n / (X_n/c_n) \to \gamma.$
	Thus $(\xi,\gamma)$ is a global attracting state.
\end{proof}

\begin{definition}[Forward accessible]
	The deterministic control model $\mathrm{CM}(\Psi)$ is forward accessible if for every $(x, c) \in \mathcal S$,
	the set $A_+((x,c)) \subset \mathcal S$ has non-empty interior.	
\end{definition}

It is known that
in an $n$-dimensional model, if the generalized controllability matrix has full rank $n$ at some control sequence, then the model is forward accessible.
In our two-dimensional model, we can summarize as follows.

\begin{proposition}[Rank condition]
	For each initial condition $(x, c)$, if there exists $k \in \mathbb{Z}_{+}$ and sequence $(u_1, \cdots, u_k)$ such that the rank of the following matrix is 2:
	$$
	\frac{\partial(X_k,c_k)}{\partial(u_1,\cdots, u_k)}.
	$$
	Then the nonlinear control model $\mathrm{CM}(\Psi)$ is forward accessible.
\end{proposition}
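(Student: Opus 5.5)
The statement is a direct consequence of the inverse function theorem applied to the $k$-step control map. The approach is to show that the reachable set $A_+((x,c))$ contains the image of an open set of controls under a local diffeomorphism.

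Fix an initial condition $(x,c)\in\mathcal S$, an integer $k$, and a control sequence $u^0=(u_1^0,\dots,u_k^0)\in O_\tau^k=(0,\infty)^k$ at which the Jacobian $\partial(X_k,c_k)/\partial(u_1,\dots,u_k)$ has rank $2$.

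First I would check that the composite map $F:(u_1,\dots,u_k)\mapsto\Psi_k(x,c,u_{1:k})$ is $C^1$ (indeed $C^\infty$) on the open set $(0,\infty)^k$.
\begin{itemize}
\item $\Psi_x$ and $\Psi_c$ in Eqs.~\eqref{Eq:Psi_x}--\eqref{Eq:Psi_c} are compositions of rational functions and powers $r^{-p}$, $r^{-p-1}$ with $r=1+t/c>0$.
\item The denominator $\frac{x}{c}r^{-p-1}+\frac{\xi}{\gamma}$ is strictly positive whenever $x>0$, $c>0$.
\end{itemize}

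At the same time I would verify that the iterates never leave $\mathcal S$, so the composition is always well defined:
\begin{itemize}
\item $X_n=xr^{-p}+\xi\ge\xi$.
\item Since $c\ge\gamma$ and $r\ge1$, we have $\frac{X_n}{c_n}=\frac{x}{c}r^{-p-1}+\frac{\xi}{\gamma}\le \frac{xr^{-p}+\xi}{\gamma}=\frac{X_n}{\gamma}$, which gives $c_n\ge\gamma$.
\end{itemize}
By induction, every $\Psi_j(x,c,u_{1:j})$ lies in $\mathcal S$, so $F$ is a smooth map from $(0,\infty)^k$ into $\mathcal S$.

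Next I would reduce to a square system. Rank $2$ means there are two indices $i<j$ whose columns $\partial F/\partial u_i$ and $\partial F/\partial u_j$ at $u^0$ are linearly independent. Freeze all other controls at their values in $u^0$ and consider
\[
G(s,t)=F(u_1^0,\dots,u_{i-1}^0,s,u_{i+1}^0,\dots,u_{j-1}^0,t,u_{j+1}^0,\dots,u_k^0),
\]
defined on the open set $(0,\infty)^2$. Its Jacobian at $(u_i^0,u_j^0)$ is an invertible $2\times2$ matrix. By the inverse function theorem, there is an open neighbourhood $V$ of $(u_i^0,u_j^0)$, which may be taken inside $(0,\infty)^2$, such that $G(V)$ is an open subset of $\mathbb R^2$ containing $F(u^0)$.

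Every point of $G(V)$ is attained by $\mathrm{CM}(\Psi)$ from $(x,c)$ at time $k$ using an admissible control in $O_\tau^k$. Hence $G(V)\subseteq A_+((x,c))$, and so $A_+((x,c))$ has non-empty interior. Since $(x,c)$ was arbitrary, provided the hypothesis holds for each initial condition as stated, $\mathrm{CM}(\Psi)$ is forward accessible.

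I expect the only delicate point to be the bookkeeping: confirming smoothness of the $k$-fold composition and invariance of $\mathcal S$, so that the inverse function theorem applies on a genuinely open domain of admissible controls. The linear-algebra reduction from rank $2$ to a nonsingular $2\times2$ minor is routine.
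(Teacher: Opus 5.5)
Your proof is correct, and it is the standard argument. The paper gives no proof of its own here but quotes the rank criterion from Meyn and Tweedie; the proof behind that criterion is exactly your reduction to a nonsingular $2\times 2$ minor, followed by the inverse function theorem on an open set of admissible controls, with your checks of smoothness and invariance of $\mathcal S$ supplying the details the paper omits. You are also right to require $u^0$ to lie in the open set $O_\tau^k=(0,\infty)^k$. That is why the paper, which later evaluates the determinant at $(u_1,u_2)=(0,0)$, has to add a continuity step to move to $(0,\epsilon)^2$.
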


\begin{lemma}
	$\mathrm{CM}(\Psi)$ is forward accessible.
\end{lemma}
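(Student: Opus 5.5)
We will verify the rank condition of the preceding proposition with $k=2$: for every initial state $(x,c)\in\mathcal S$ we exhibit controls $(u_1,u_2)\in O_\tau^2$ at which the $2\times 2$ Jacobian $\partial(X_2,c_2)/\partial(u_1,u_2)$ is nonsingular. It is convenient to work in the coordinates $(X,Y)$ with $Y=X/c$. The map $(X,c)\mapsto(X,X/c)$ is a diffeomorphism of $(0,\infty)^2$, so the rank is unchanged. In these coordinates one step reads
\[
X' = \xi + x\,r^{-p},\qquad Y' = \frac{\xi}{\gamma} + y\,r^{-(p+1)},\qquad r = 1+\frac{u}{c},\quad c=x/y .
\]

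First we compute the one-step derivative with respect to the control. It is
\[
\frac{\partial(X',Y')}{\partial u} = -\frac{1}{c}\bigl(p\,x\,r^{-p-1},\;(p+1)\,y\,r^{-p-2}\bigr).
\]
For two steps, the chain rule gives the columns $v_2=\partial(X_2,Y_2)/\partial u_2$ and $v_1=J_2\,\partial(X_1,Y_1)/\partial u_1$. Here $J_2$ is the Jacobian of the second step with respect to the state $(X_1,Y_1)$ at fixed $u_2$. Because $c_1=X_1/Y_1$ enters through $r_2$, the matrix $J_2$ is not diagonal.

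To simplify, we take $u_2\to 0^+$, or more generally evaluate at small $u_2$ and use continuity of the determinant. At $u_2=0$ we have $r_2=1$ and $J_2=I$, since $\partial r_2/\partial(X_1,Y_1)$ carries a factor $u_2$. Then
\[
v_1 \propto \bigl(p\,x\,r_1^{-p-1},\;(p+1)\,y\,r_1^{-p-2}\bigr)
\quad\text{and}\quad
v_2 \propto \bigl(p\,X_1,\;(p+1)\,Y_1\bigr),
\]
with $X_1=\xi+x r_1^{-p}$ and $Y_1=\xi/\gamma + y r_1^{-p-1}$.

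The determinant vanishes iff the two ratios agree:
\[
\frac{y r_1^{-p-2}}{x r_1^{-p-1}} = \frac{Y_1}{X_1},
\qquad\text{i.e.}\qquad
\frac{1}{c\,r_1} = \frac{1}{c_1}.
\]
Equivalently, $c_1 = c+u_1$. So it suffices to find one $u_1>0$ with $c_1(u_1)\neq c+u_1$, and then take $u_2>0$ small enough that the determinant stays nonzero by continuity. The determinant is continuous in $u_2$ on $[0,\infty)$ since $\Psi$ is smooth up to $u_2=0$.

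The main obstacle is confirming that $c_1(u_1)\ne c+u_1$ for some $u_1$. As $u_1\to\infty$, $c_1\to\gamma$ while $c+u_1\to\infty$, so the two certainly differ for large $u_1$. This already settles the claim. Alternatively, Eq.~\eqref{Eq:c_tau} shows $c_1-c<\tau$ strictly whenever $\xi>0$ and $c\ge\gamma$. Hence the rank is $2$ at every initial state, and the preceding rank-condition proposition yields forward accessibility.
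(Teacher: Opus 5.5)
Your proof is correct. It uses the same overall strategy as the paper: verify the rank condition with $k=2$ by evaluating the Jacobian at a boundary control, then move into $O_\tau^2$ by continuity. The computation, however, is organized quite differently.

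The paper works in the original coordinates $(X,c)$ and evaluates at $(u_1,u_2)=(0,0)$. Via the appendix lemma it obtains an explicit determinant with the factor $(\gamma-c)$. This vanishes on the boundary line $c=\gamma$ (the stated condition ``$c>\xi$'' should read $c\neq\gamma$). The paper therefore needs a preliminary step that moves the state to $c_1>\gamma$ before the two-step argument applies.

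You instead pass to $(X,Y)=(X,X/c)$. In these coordinates each step reads $X'=\xi+Xr^{-p}$, $Y'=\xi/\gamma+Yr^{-(p+1)}$, with coupling only through $r$. You also freeze only $u_2=0$, which reduces degeneracy to the transparent condition $c_1=c+u_1$. In fact
\[
c_1-c-u_1=\frac{\xi(\gamma-c-u_1)}{\gamma D_1},
\qquad D_1=\tfrac{x}{c}r_1^{-(p+1)}+\tfrac{\xi}{\gamma},
\]
so degeneracy never occurs for $u_1>0$ and $c\ge\gamma$. Your argument therefore covers every initial state uniformly, including $c=\gamma$, without the appendix algebra.

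Setting $u_1=0$ in your criterion recovers the paper's factor $(\gamma-c)$, since $c_1|_{u_1=0}=c$ exactly when $c=\gamma$. Converting back through the Jacobian of $(X,Y)\mapsto(X,X/Y)$ reproduces the paper's closed-form determinant. The paper's route gives a fully explicit Jacobian at a single control point; yours gives a short, structural proof with no case distinction.
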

\begin{proof}
	Using Lemma~\ref{Lemma:derivatives} and after some calculations, the determinant of two-step transition Jacobian is	
	$$
	\det\left[\frac{\partial(X_2,c_2)}{\partial(u_1,u_2)}\right]_{(u_1,u_2)=(0,0)}
	=
	\frac{p (p+1) x \xi (\gamma - c) (x+2\xi) (\gamma x + c \xi)}{c (x + \xi) (\gamma x + 2 c \xi)^2} \neq 0 \quad \text{for } c > \xi,
	$$
	see Appendix~\ref{Sect:determinant}.
	Since the map $\Psi$ is smooth, the determinant is continuous in $(u_1, u_2)$, and hence there exists $\epsilon>0$ such that
	$
	\det\big(\partial (X_2,c_2)/\partial(u_1,u_2)\big)\neq 0
	$
	for all $(u_1,u_2)\in (0,\epsilon)^2$. 
	
	If the initial state has $c=\gamma$, take any $u_1>0$.
	By continuity of the update, the first step moves the chain to a state with $c_1>\xi$, after which the two-step argument applies. 
\end{proof}

\begin{definition}[T-chain]
	Let $a$ be a sampling distribution over the non-negative integers $\mathbb{Z}_+$
	and there exists a substochastic transition kernel $T$ such that
	$$
	K_a(x, B) := \sum_{k=0}^{\infty} a(k) P^k(x, B) \geq T(x, B), \quad x \in \mathcal X, B \in \mathcal B(\mathcal X)
	$$
	where $T(\cdot, B)$ is a lower semicontinuous for any $B \in \mathcal B(\mathcal X)$.
	Then $T$ is called a continuous component of $K_a$.
	If $\{X_n\}$ is a Markov chain for which there exists a sampling distribution $a$ such that $K_a$ possesses a continuous component $T$ with $T(x, \mathcal X) > 0$ for all $x$, then $\{X_n\}$ is called a T-chain. 
\end{definition}

\begin{theorem}
	Suppose that $\mathrm{CM}(\Psi)$ is forward accessible and the disturbance process $\tau$ of $\mathrm{NSS}(\Psi)$ satisfies the density condition. 
	That is the distribution of $\tau$  has a continuous density $f_\tau$ and $O_\tau = \{x : f_\tau(x) > 0 \}$ is a open set.
	Under this condition, the following holds:
	\begin{enumerate}
		\item The $\mathrm{NSS}(\Psi)$ is a T-chain.
		\item The $\mathrm{NSS}(\Psi)$ is $\psi$-irreducible if and only if $\mathrm{CM}(\Psi)$ is $M$-irreducible.
		\item If $\mathrm{CM}(\Psi)$ is $M$-irreducible and the minimal set $M$ is aperiodic, then the $\mathrm{NSS}(\Psi)$ is a $\psi$-irreducible aperiodic T-chain.
	\end{enumerate}
\end{theorem}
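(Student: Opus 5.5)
The plan is to reduce the statement to the standard Meyn–Tweedie results for nonlinear state-space models (Chapter 7 of \cite{meyn2009markov}). The one model-specific point is that the disturbance $\tau_{n+1}$ here is not i.i.d.: its law depends on the current state through $\Phi^{-1}(\cdot;x,c)$. So I will first recast the chain with the genuinely i.i.d. innovations $\varepsilon_n$. Define
\[
\widetilde\Psi(x,c,e) = \Psi\bigl(x,c,\Phi^{-1}(e;x,c)\bigr).
\]
Since $\partial_t\Phi(t;x,c)=\lambda(t;x,c)\ge\mu>0$ and $\Phi$ is smooth, the implicit function theorem makes $\Phi^{-1}$ smooth in $(e,x,c)$, with $\partial_e\Phi^{-1}=1/\lambda>0$. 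Hence $\widetilde\Psi$ is smooth, and for fixed $(x,c)$ the map $e\mapsto t$ is a diffeomorphism of $(0,\infty)$ onto itself. It follows that the reachable sets of $\mathrm{CM}(\widetilde\Psi)$ and $\mathrm{CM}(\Psi)$ coincide. By the chain rule, the Jacobians $\partial(X_k,c_k)/\partial(e_1,\dots,e_k)$ and $\partial(X_k,c_k)/\partial(u_1,\dots,u_k)$ differ by right multiplication with a triangular matrix with nonzero diagonal entries $1/\lambda$, so they have the same rank. In particular, forward accessibility and $M$-irreducibility transfer unchanged. The density condition is imposed on $F_\varepsilon$, with continuous density $f_\varepsilon$ positive on an open set $O_\varepsilon$.

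For part 1 (the T-chain property), fix $(x_0,c_0)$. Forward accessibility gives $k$ and a control $e^0\in O_\varepsilon^k$ at which the map $e\mapsto\widetilde\Psi_k(x_0,c_0,e)$ has rank $2$. I will pick two coordinates of $e$ whose $2\times 2$ minor is nonsingular and freeze the remaining ones near $e^0$. By the implicit/inverse function theorem, applied jointly in the initial state and the control, there are neighbourhoods $N$ of $(x_0,c_0)$ and $V$ of $e^0$ such that for $(x,c)\in N$ the map is a submersion on $V$. A change of variables then gives
\[
P^k\bigl((x,c),B\bigr)\;\ge\;\int_{V}\mathbf 1_B\bigl(\widetilde\Psi_k(x,c,e)\bigr)\prod_i f_\varepsilon(e_i)\,\D e.
\]
After the coarea/change-of-variables step, the right-hand side equals $\int_B q(x,c,y)\,\D y$ with a density $q$ that is continuous in $(x,c)$; continuity uses that $f_\varepsilon$ and the Jacobian are continuous. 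A kernel of this form is lower semicontinuous in $(x,c)$ and strictly positive near $(x_0,c_0)$. Using a countable cover of $\mathcal S$ by such neighbourhoods $N_j$ with associated times $k_j$, I will sum the local components with summable weights into $K_a$, $a(k)\propto 2^{-k}$. Multiplying each local kernel by a continuous cutoff $\phi_j$ supported in $N_j$ preserves lower semicontinuity, and the resulting $T$ satisfies $T(x,\mathcal S)>0$ everywhere.

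For parts 2 and 3, I will use the T-chain structure. If $\mathrm{CM}$ is $M$-irreducible, the preceding theorem gives a globally attracting state $x^*$. The T-chain property at $x^*$ yields a neighbourhood $O$ of $x^*$ and a nontrivial measure $\nu$ with $K_a(y,\cdot)\ge\nu$ for $y\in O$. Continuity of $\widetilde\Psi_k$, together with positivity of $f_\varepsilon$ on the open set $O_\varepsilon$, shows that every $y$ reaches $O$ with positive probability. Hence the chain is $\nu$-irreducible. Conversely, if the chain is $\psi$-irreducible, the support of $\psi$ must lie in the closure of every $A_+(y)$; otherwise the open complement would be invariant with positive mass, contradicting irreducibility. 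Any point of that support is then globally attracting. For part 3, periodicity of the chain would induce a cyclic decomposition of $M$ into closed sets, contradicting aperiodicity of $M$. In the present model this is immediate, because $(\xi,\gamma)$ can be approached in one step as well as in two, so $M$ has period one.

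I expect the main obstacle to be the lower semicontinuity step in part 1. The change of variables must be carried out uniformly in the initial condition, and because the law of $\tau$ is state-dependent, one has to check that the effective density of $\tau$, namely $f_\varepsilon(\Phi(t;x,c))\,\lambda(t;x,c)$, varies continuously in $(x,c)$. This is exactly why I work with $\varepsilon$ rather than $\tau$ as the disturbance.
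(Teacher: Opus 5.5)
Your proposal is sound, but it does not follow the paper, because the paper gives no proof of this theorem. It imports the three conclusions from Chapter~7 of Meyn--Tweedie and applies them with $\tau$ as the disturbance, justified only by the remark that $\tau_{n+1}$ ``uniquely captures the new randomness''.

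Your recasting through $\varepsilon$ is the genuinely different step, and it is what makes that citation legitimate. The conditional law of $\tau_{n+1}$, with density $f_\varepsilon(\Phi(s;x,c))\,\lambda(s;x,c)$, depends on the state, so the i.i.d.\ disturbance hypothesis of the NSS framework fails for $\tau$. It does hold for $\varepsilon$. Moreover, $\widetilde\Psi$ is smooth because $\partial_t\Phi=\lambda\ge\mu>0$. The density condition on $\tau$ at any fixed state is equivalent to the one on $\varepsilon$, since $\Phi(\cdot;x,c)$ is a diffeomorphism of $(0,\infty)$.

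Your two transfer facts correctly carry forward accessibility and $M$-irreducibility over. These are the bijection of control sets and the lower-triangular $\partial t/\partial e$ with diagonal $1/\lambda$. After that you could simply invoke Meyn--Tweedie for $\mathrm{NSS}(\widetilde\Psi)$; your re-derivations of parts~1--3 follow their proofs.

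One caveat: the control sets coincide only when $O_\varepsilon=(0,\infty)$, which is what the paper's $O_\tau=(0,\infty)$ implicitly assumes. Otherwise the admissible $\tau$-controls $\Phi^{-1}(O_\varepsilon;x,c)$ are state-dependent, and $\mathrm{CM}(\Psi)$ with a fixed control set is not the right object.

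If you do re-derive the parts, a few steps need tightening.

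In part~1:
\begin{itemize}
\item Extracting a full-rank control from forward accessibility uses Sard's theorem.
\item The non-selected coordinates must be integrated out by Fubini rather than fixed.
\item The resulting density $q$ is in general only lower semicontinuous in $(x,c)$, since the image of $V$ moves with the initial state. Lower semicontinuity suffices by Fatou. Alternatively, insert a continuous compactly supported cutoff in the $e$-variables to get continuity.
\end{itemize}

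In part~2, the uniform minorization $K_a(y,\cdot)\ge\nu$ near $x^*$ comes from this joint lower semicontinuity, not from the abstract T-chain definition. Under the abstract definition the neighbourhood depends on $B$; the standard route instead takes $\psi=T(x^*,\cdot)$.

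In part~3, the claim that a period $d\ge 2$ induces a closed cyclic decomposition of $M$ is the real content of the Meyn--Tweedie periodicity result. It needs two ingredients: the fixed-$k$ minorization from part~1, since a neighbourhood on which $P^k(x,\cdot)\ge\nu$ cannot meet two cyclic classes, and $M\subseteq\operatorname{supp}\psi$. Your observation that $(\xi,\gamma)$ is approached in one and in two steps verifies the \emph{hypothesis} that $M$ is aperiodic for this model. It does not prove the implication asserted in part~3.
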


\begin{lemma}[Aperiodicity of the minimal set]
	Let $\mathrm{CM}(\Psi)$ be the deterministic control model associated with our nonlinear state-space model, and let $M = \overline{A_+((\xi,\gamma))}$ be the closure of the forward reachable set from the globally attracting state $(\xi,\gamma)$. Then $M$ is aperiodic.
\end{lemma}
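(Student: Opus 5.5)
Recall from \cite{meyn2009markov} that a minimal set $M$ of an $M$-irreducible control model is periodic with period $d\ge 2$ if it splits into $d$ disjoint closed sets $G_1,\dots,G_d$ visited cyclically, i.e.\ $A_+^1(G_i)\subseteq G_{i+1}$ (indices mod $d$), where $A_+^1$ denotes the one-step reachable set. Aperiodicity means no such partition exists with $d\ge 2$. My plan is to show that points of $M$ arbitrarily close to $(\xi,\gamma)$ can be reached from $(\xi,\gamma)$ in $k$ steps and in $k+1$ steps, and in fact in every number of steps. A cyclic decomposition is then impossible.

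\textbf{Step 1: fixed-time reachability of neighbourhoods of $(\xi,\gamma)$.} I reuse the feedback control from the proof that $(\xi,\gamma)$ is globally attracting, namely $u_n=k_nc_n$ with $k_n$ large. Under a single control $u$ with $r=1+u/c$ very large, the update gives
\[
X_1=\xi+xr^{-p}\approx \xi,\qquad X_1/c_1=\xi/\gamma+(x/c)r^{-(p+1)}\approx\xi/\gamma ,
\]
so $(X_1,c_1)$ lies within any prescribed $\eta$ of $(\xi,\gamma)$. Continuity of $\Psi$ gives uniformity over $(x,c)$ in a compact set. Hence for every $n\ge1$ and every $\eta>0$ there is a control sequence of length $n$ steering $(\xi,\gamma)$ into $B_\eta(\xi,\gamma)$: keep the state in a compact set and make the final step (or every step) use a huge $u$. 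The point $(\xi,\gamma)$ itself lies in the closed set $M$.

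\textbf{Step 2: contradiction with a cyclic decomposition.} Suppose $G_1,\dots,G_d$ with $d\ge2$ is a periodic decomposition of $M$, and say $(\xi,\gamma)\in G_1$. By Step 1, $A_+^1((\xi,\gamma))$ and $A_+^2((\xi,\gamma))$ both contain points converging to $(\xi,\gamma)$. These lie in $G_2$ and $G_3$ respectively. Since each $G_i$ is closed, $(\xi,\gamma)\in G_2\cap G_3$. For $d=2$ this already gives $(\xi,\gamma)\in G_1\cap G_2$, a contradiction. For $d\ge3$, disjointness of $G_2$ and $G_3$ gives the contradiction directly. Therefore $d=1$ and $M$ is aperiodic.

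\textbf{Main obstacle.} The delicate point is matching the paper's (Meyn--Tweedie) definition of periodicity for minimal sets, which uses closed sets and control-reachable sets, and checking that $(\xi,\gamma)$ genuinely belongs to $M$ and to one $G_i$. I would handle this by noting that $(\xi,\gamma)\in\overline{A_+((\xi,\gamma))}$ via Step 1. Also, $(\xi,\gamma)$ itself is not exactly reachable, since the controls are strictly positive. So I will argue throughout with closures and with the continuity of $\Psi$ in $(x,c,u)$, rather than using exact fixed points.
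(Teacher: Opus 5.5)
Your proof is correct and follows the same idea as the paper's: using a very large final control, you show that arbitrarily small neighbourhoods of $(\xi,\gamma)$ are reachable from $(\xi,\gamma)$ in consecutive numbers of steps (you use 1 and 2, the paper uses 2 and 3 and appeals to $\gcd(2,3)=1$). Your version is slightly more careful, because you use closedness of the cyclic classes $G_i$ to show explicitly why this rules out a periodic decomposition; in fact the one-step case alone already forces $(\xi,\gamma)\in G_1\cap G_2$.
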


\begin{proof}
	Fix any neighborhood $U$ of $(\xi,\gamma)$.  
	Starting from $y=(\xi,\gamma)$, take an arbitrary control $u_1>0$; by continuity of $\Psi$, the resulting state is irrelevant.  
	Since for every $(x,c)$ we have 
	\[
	F_u(x,c)\to (\xi,\gamma)\quad\text{as }u\to\infty
	\]
	uniformly on compact sets, we may choose $u_2$ sufficiently large so that the two-step state $F_{u_2}\circ F_{u_1}(y)$ lies in $U$.  
	Similarly, for any two arbitrary controls $u_1,u_2>0$, selecting a sufficiently large $u_3$ ensures that the three-step state 
	$F_{u_3}\circ F_{u_2}\circ F_{u_1}(y)$ also lies in $U$.  
	Thus $U$ is reachable from $y$ in both 2 and 3 steps. Since $\gcd(2,3)=1$, the period of $M$ equals $1$, establishing that $M$ is aperiodic.
\end{proof}

Therefore, our model is a $\psi$-irreducible aperiodic T-chain.

\subsection{Lyapunov criterion}

\begin{theorem}[Foster--Lyapunov criterion for Joint Stability]\label{Thm:FL_Joint}
	Suppose $ \{(X_n, c_n)\}_{n \in \mathbb{N}}$ is a $\psi$-irreducible Markov chain on the state-space $\mathcal{X} = \mathbb{R}_{\ge 0} \times [\gamma, \infty)$. If there exists a measurable function $V: \mathcal{X} \to [0, \infty)$, a petite set $C \subset \mathcal{X}$, and constants $\delta > 0$ and $b < \infty$ such that
	\begin{align}
		\mathbb{E}[V(X_{n+1}, c_{n+1}) \mid X_{n} = x, c_n = c ] - V(x, c) \leq - \delta + b \mathbbm{1}_{C}(x, c), \label{Eq:Foster_Joint}
	\end{align}
	then the Markov chain is positive Harris recurrent, and there exists a unique invariant probability measure $\pi$.
\end{theorem}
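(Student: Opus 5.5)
This is the classical Foster--Lyapunov drift criterion, so the plan is to reduce it to results in \cite{meyn2009markov} rather than argue from scratch. The argument runs through first hitting times of $C$. Write $\Phi_n=(X_n,c_n)$ and $\tau_C=\inf\{n\ge1:\Phi_n\in C\}$.

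The first step is the comparison theorem. Iterating \eqref{Eq:Foster_Joint} and applying optional stopping at $\tau_C\wedge N$ to the process
\[
V(\Phi_{n})+\delta n,
\]
which is a nonnegative supermartingale up to the first entrance into $C$, gives
\[
\delta\,\mathbb{E}_{(x,c)}[\tau_C\wedge N]\le V(x,c)+b\,\mathbbm{1}_C(x,c).
\]
Letting $N\to\infty$ yields $\mathbb{E}_{(x,c)}[\tau_C]\le \delta^{-1}\big(V(x,c)+b\,\mathbbm{1}_C(x,c)\big)$. Since $V$ takes finite values, this bound shows that $\mathbb{P}_{(x,c)}(\tau_C<\infty)=1$ for every starting state.

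The second step concerns uniformity on $C$. For $(x,c)\in C$, the first transition followed by the same bound gives
\[
\mathbb{E}_{(x,c)}[\tau_C]\le 1+\delta^{-1}\,PV(x,c)\le 1+\delta^{-1}\big(V(x,c)-\delta+b\big).
\]
Hence $\sup_C\mathbb{E}[\tau_C]<\infty$ as soon as $V$ is bounded on $C$. This boundedness is the main obstacle: the statement as written does not assume it. I would handle it in one of two ways. The first option is to invoke the version of the criterion (Meyn--Tweedie, Theorem 11.3.4 together with Proposition 11.3.2) in which $C$ is replaced by a sublevel set $\{V\le n\}\cap C$. That set is still petite and still carries a drift inequality of the same form with enlarged constants, and $V$ is bounded on it. The second option, in our application, is to note that $V$ will be continuous and $C$ compact.

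The third step draws the conclusions. Since $C$ is petite, the chain is $\psi$-irreducible, every point reaches $C$ almost surely, and $\sup_C\mathbb{E}[\tau_C]<\infty$, Theorem 10.4.10 of \cite{meyn2009markov} yields positivity. The same hitting property shows that $C$ is Harris recurrent, hence so is the chain by Theorem 9.1.8. Finally, Theorem 10.4.4 (Kac-type construction of $\pi$ from excursions off the petite set) gives existence and uniqueness of the invariant probability measure. In this paper $\psi$-irreducibility is already supplied by the $T$-chain results of the previous subsection.
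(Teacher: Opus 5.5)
\paragraph{Assessment.} The paper does not prove this statement; it quotes it from \cite{meyn2009markov}. Your reduction to the comparison theorem, Harris recurrence from a petite set with $L(z,C)\equiv 1$, and Kac's formula is therefore the natural route. Your Step 1 and the Harris-recurrence part of Step 3 are correct and use only finiteness of $V$.

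\paragraph{The gap.} The problem is the one you flag in Step 2, and your first repair does not close it. The set $C'=C\cap\{V\le n\}$ does \emph{not} satisfy a drift inequality $PV\le V-\delta'+b'\mathbbm{1}_{C'}$. At a point $y\in C$ with $V(y)>n$ you only know $PV(y)\le V(y)-\delta+b$, which allows positive drift when $b>\delta$. Since $\mathbbm{1}_{C'}(y)=0$ there, enlarging the constants does not help. So Steps 1--2 cannot be rerun with $C'$ in place of $C$, and as written you never produce a petite set $A$ with $\sup_A\mathbb{E}[\tau_A]<\infty$, which is what your appeal to Theorem 10.4.10 needs. Your second repair ($V$ continuous, $C$ compact) is valid for the paper's application $V=x+wc$, $C=[0,K_x]\times[\gamma,C_1]$. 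However, it proves the theorem only under the extra hypothesis $\sup_C V<\infty$, which the statement does not assume.

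\paragraph{The missing idea.} The result you cite (Theorem 11.3.4(i) of \cite{meyn2009markov}) gives a hitting-time bound, not a new drift inequality. Writing $Z_n=(X_n,c_n)$: for every $B$ with $\psi(B)>0$ there is $c(B)<\infty$ such that $\delta\,\mathbb{E}_z[\tau_B]\le V(z)+c(B)$. To get it, use petiteness of $C$ a second time.
\begin{itemize}
\item By $\psi$-irreducibility, $B$ is uniformly accessible from $C$: there are $M$ and $\epsilon>0$ with $\inf_{y\in C}\mathbb{P}_y(Z_k\in B \text{ for some } k\le M)\ge\epsilon$.
\item Hence the expected number of visits to $C$ before $\tau_B$ is bounded by a constant depending only on $M$ and $\epsilon$.
\item Stopping the supermartingale $V(Z_n)+\delta n-b\sum_{k<n}\mathbbm{1}_C(Z_k)$ at $\tau_B\wedge N$ then yields the bound.
\end{itemize}

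\paragraph{Completing the argument.} Apply this bound with $B=\{V\le n\}$, \emph{not} intersected with $C$.
\begin{itemize}
\item $\psi(B)>0$ for large $n$, because $V$ is finite.
\item $B$ is petite: by your Step 1, the petite set $C$ is reached from $B$ within expected time at most $(n+b)/\delta$, and a set from which a petite set is uniformly accessible is itself petite.
\item $V\le n$ on $B$, so $\sup_B\mathbb{E}[\tau_B]<\infty$, and your Step 3 goes through with $B$ in place of $C$.
\end{itemize}

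\paragraph{Alternative.} Once Harris recurrence is known, iterating the drift gives $0\le P^NV(z)\le V(z)-\delta N+b\sum_{k<N}P^k(z,C)$. So the Ces\`aro averages of $P^k(z,C)$ stay above $\delta/b$. For a null chain the ratio limit theorem forces them to $0$, because $\pi(C)<\infty$ for petite $C$, so the chain must be positive.
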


\begin{theorem}
	Under the condition of Assumption~\ref{Ass:stability},
	the Markov chain $\{(X_n, c_n)\}_{n \in \mathbb{N}}$ is positive Harris recurrent.
\end{theorem}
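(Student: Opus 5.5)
We will verify the Foster--Lyapunov drift condition of Theorem~\ref{Thm:FL_Joint} and take from the preceding subsection the fact that $\{(X_n,c_n)\}$ is a $\psi$-irreducible aperiodic T-chain. For a $\psi$-irreducible T-chain every compact set is petite (Meyn--Tweedie, Thm.~6.2.5). So it suffices to find $V\ge 0$ and a compact $C=[\xi,\bar x]\times[\gamma,\bar c]$ such that $\E_{x,c}[V(X_n,c_n)]-V(x,c)\le-\delta$ outside $C$, and $\E_{x,c}[V(X_n,c_n)]$ is bounded on $C$. We use the additive candidate $V(x,c)=x+\kappa c$ with a weight $\kappa>0$ to be chosen.

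Boundedness on $C$ is easy. We have $X_n\le x+\xi$ deterministically, and by \eqref{Eq:c_tau}, $c_n\le c+\tau_n+\xi/D$ with $D\ge\xi/\gamma$. Lemma~\ref{lem:mean-tau-bound} then gives $\E_{x,c}[c_n]\le c+\mu_\varepsilon/\mu+I(\mu c)/\mu+\gamma$, which is bounded on compacts.

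For the drift outside $C$ we split into regions.
\begin{itemize}
\item \emph{Region $c>\bar c$, any $x$.} Proposition~\ref{Prop:drift_c} makes the $c$-drift negative once $\bar c>\max(C_1,C_2)$. We need it uniformly negative, at least $-\delta_c$. From the explicit bounds $U_1,U_2$ the term $c\xi(\gamma-c)/(\gamma x+c\xi)$ is of order $-c$ when $x$ is bounded. When $x$ is large it tends to $-\xi c(c-\gamma)/(\gamma x)$, which is small. Meanwhile the $X$-drift is at most $\xi$ always, and is negative when $x$ is large and $c$ is small. The delicate corner is large $x$ together with large $c$: there the $c$-drift is only of order $c^2/x$, and the $X$-drift, roughly $\xi-p\mu_\varepsilon/c$, may be positive. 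In that corner I would refine the estimate. For large $x$, $\tau_n$ is of order $1/x$, so $c_n-c\approx \xi(1-c/\gamma)/(x/c+\xi/\gamma)+O(1/x)$, and $X_n-x\approx \xi-p x\tau_n/c$. I would check that $\kappa$ times the first is dominated using Assumption~\ref{Ass:stability} through $\xi<p/C_2$. If that fails, I would switch to $V=x+\kappa c^2$ or $V=x+\kappa\,x\,c$ to give the $c$-component more weight.
\item \emph{Region $x>\bar x$, $c\le\bar c$.} Proposition~\ref{Pro:ndX} gives negative $X$-drift when $c<2\gamma$. For $2\gamma\le c\le\bar c$ the $X$-drift $\xi-p\mu_\varepsilon/c+O(1/x)$ need not be negative. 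Here I would use the $c$-drift, which for large $x$ and fixed $c>\gamma$ behaves like $-\xi c(c-\gamma)/(\gamma x)+O(1/x)$; this is also small. So the genuine mechanism must be a two-step argument. A large $x$ forces tiny $\tau_n$, so $X_n/c_n$ updates almost additively and $c_n$ is pushed toward a value solving $c=\gamma(x+\xi)/(x\gamma/c+\xi)$, i.e.\ contraction toward $\gamma$ over many steps. I would therefore replace the one-step drift by an $m$-step drift (Meyn--Tweedie Thm.~19.1.2 / state-dependent drift). After $m$ steps with $x$ large, $c$ falls below $2\gamma$ and Proposition~\ref{Pro:ndX} takes over, with a linear-in-$m$ growth bound $\xi m$ on $X$.
\end{itemize}

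The main obstacle is exactly this coupling region where $x$ is large and $c$ is moderately above $2\gamma$ (and its large-$c$ analogue). There both single-coordinate drifts from Propositions~\ref{Prop:drift_c} and~\ref{Pro:ndX} degenerate to order $1/x$ or become positive. My first attempt would be the weighted function $V=x+\kappa c$, using the condition $\xi<p/C_2$ from Assumption~\ref{Ass:stability} to balance $\xi-p\mu_\varepsilon/c$ against $\kappa$ times the $c$-decrease. Failing that, I would fall back on an $n(x,c)$-step state-dependent drift criterion.
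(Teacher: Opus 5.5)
\textbf{There is a genuine gap: the critical region is never handled.} Your reduction to Theorem~\ref{Thm:FL_Joint} is fine: compact sets of a $\psi$-irreducible T-chain are petite, and $\E_{x,c}V$ is bounded on compacts. You have also located the right region. But the proposal never gets through that region, and the remedies you list either provably fail or rest on a wrong picture of the dynamics.

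Fix $c$ and let $x\to\infty$. Then $x\tau_n\to\varepsilon_n$, and by dominated convergence (using $\E[\varepsilon^2]<\infty$) you get two limits. First, $\E_{x,c}[X_n-x]\to\xi-p\mu_\varepsilon/c$. Second, $x\,\E_{x,c}[c_n-c]\to\mu_\varepsilon-\xi c(c-\gamma)/\gamma=:-g(c)$.

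So for $V=x+\kappa c$ and $V=x+\kappa c^2$ alike, the $c$-part of the drift vanishes and $\Delta V(x,c)\to\xi-p\mu_\varepsilon/c$. This is positive for every $c>p\mu_\varepsilon/\xi$, whatever $\kappa$ is. Neither $\xi<p/C_2$ nor any compact $C$ can repair this.

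Conversely, the proof of Proposition~\ref{Pro:ndX} gives negative $X$-drift for large $x$ uniformly in $c\le c_1$, for any $c_1<p\mu_\varepsilon/\xi$, not only for $c<2\gamma$. So the genuinely bad set is $c\ge p\mu_\varepsilon/\xi$.

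The paper's proof is exactly your first attempt: $V=x+wc$, $C=[0,K_x]\times[\gamma,C_1]$, with $(K_x,\infty)\times(C_2,\infty)$ dismissed by taking $w$ large. By the computation above that step does not hold, because there the $c$-drift is $O(1/x)$ rather than bounded away from zero. Your suspicion is justified, but the paper offers no argument for this region that you could borrow.

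\textbf{Your multi-step fallback misidentifies the mechanism.} The $\tau_n$-contribution to $c_n-c$ is $\approx\varepsilon_n/x$, the same order as the contraction term $-\xi c(c-\gamma)/(\gamma x)$. So for large $x$ the $c$-coordinate relaxes to the root $c^\dagger$ of $g$ (i.e.\ $\xi c(c-\gamma)=\gamma\mu_\varepsilon$), not to $\gamma$.

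Now $c^\dagger\ge 2\gamma$ if and only if $\xi\le\mu_\varepsilon/(2\gamma)$, and the assumption allows this. For example, $p=1.1$, $\xi=0.5$, $\gamma=0.7631$ (one of the paper's interior simulation settings) gives $c^\dagger\approx 1.67>2\gamma\approx 1.53$. So the step ``$c$ falls below $2\gamma$'' fails in general.

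What actually holds, and where the assumption enters, is $\xi-p\mu_\varepsilon/c^\dagger<0$: for $\mu_\varepsilon=1$, $k_p<3/4$ gives $c^\dagger<C_2<p/\xi$.

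There is also a time-scale problem. $c$ moves only $O(1/x)$ per step, so relaxation takes order $x$ steps, during which $X$ may grow by a constant multiple of $x$. An $n(x,c)$-step criterion therefore needs $n\asymp x$ and is really a fluid-limit argument.

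\textbf{A one-step route that can work.} Use a multiplicatively coupled function such as $V=x\,h(c)+wc$, whose large-$x$ drift tends to $(\xi-p\mu_\varepsilon/c)h(c)-g(c)h'(c)$.
\begin{itemize}
\item Take $h\equiv 1$ on $[\gamma,c^\dagger+\eta]$.
\item Beyond that, set $g h'=(\xi-p\mu_\varepsilon/c)^+h+\delta$; this keeps $h$ bounded since $\int^\infty \D c/g(c)<\infty$.
\item Let $wc$ handle bounded $x$ with large $c$.
\end{itemize}
Your candidate $x+\kappa xc$ has this structure. You would still have to check the limiting inequality for all $c\ge\gamma$ and make every $x\to\infty$ limit uniform in $c$, including $c$ comparable to $x$.
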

\begin{proof}
	
	We define the Lyapunov function $V(x, c) = x + wc$ for some $w > 0$. 
	The joint drift is
	$$ \Delta V(x, c) = \mathbb{E}_{x,c}[X_{n} - x ] + w \mathbb{E}_{x,c}[c_{n} - c ].$$
	We define the compact set $C = [0, K_x] \times [\gamma, C_1]$ for some $K_x > 0$.
	By appropriately choosing $w$ and $K_x$, the drift $\Delta V(x, c)$ is negative outside of $C$.
	
	For the region $x > K_x$ and $c \le  C_2$, under the condition of Assumption~\ref{Ass:stability},
	we have
	$$ c \leq C_2  < \frac{p}{\xi}.$$
	By Proposition~\ref{Pro:ndX}, there exists $x^* >0$ such that for $x > x^*$,
	$$\mathbb{E}_{x, c}[X_n ] - x < 0.$$
	Also, by Eq.~\eqref{Eq:U_bound}, for any $K_x >0$ and $x > K_x$, we have
	\begin{align*}
		\mathbb{E}_{x,c}[\,c_n - c\,] &\leq  \dfrac{\mu_{\varepsilon}}{\mu + k_p x}
		+ \dfrac{I(c(\mu + k_p x))}{\mu}
		+ \dfrac{c\xi(\gamma - c)}{\gamma x + c \xi} \\
		&\leq   \dfrac{\mu_{\varepsilon}}{\mu + k_p K_x} +  \dfrac{I(c(\mu + k_p K_x))}{\mu}  +   \dfrac{c\xi(\gamma - c)}{\gamma K_x + c \xi}. \label{Eq:ineq_region1}
	\end{align*}
	Thus, we can  choose $K_x > x^*$ large enough such that 
	$$ \Delta V(x, c) = \mathbb{E}_{x, c}[X_n ] - x + w \left( \dfrac{\mu_{\varepsilon}}{\mu + k_p K_x} +  \dfrac{I(c(\mu + k_p K_x))}{\mu}  +   \dfrac{c\xi(\gamma - c)}{\gamma K_x + c \xi} \right) < 0 .$$
	
	Consider the regions $x < K_x$, $c > C_1$ and $x> K_x$, $c > C_2$, i.e., 
	$$[0, K_x] \times (C_1, \infty) \cup (K_x, \infty) \times (C_2, \infty).$$
	In this region, $X$ might have a positive drift but bounded by $\xi$ and by Proposition~\ref{Prop:drift_c}, $c$ has a negative drift.
	Therefore, choosing $w$ large enough, the total drift $ \Delta V$ is negative.
	
	For the compact set $C$, the one time drifts of $X$ and $c$ are bounded by $\xi$ and a constant in~Eq.~\eqref{Eq:max_cn},
	$ \Delta V $ is also bounded by some $b$.
	
\end{proof}

\section{Simulation Study}\label{Sect:simul}

To assess the long-memory properties of the proposed process, 
we conduct a simulation study with sample size $n=50{,}000$ using unit exponential distribution.
The baseline intensity is fixed at $\mu=0.4$, while the parameters $p$, $\xi$, and $\gamma$ are varied to explore different regimes.
We focus on parameter configurations located on the boundary of the stability region (Assumption~1) and in its interior, and compare the resulting dependence structures to characterize the transition from critical to stationary dynamics.

Persistence is quantified using the local Whittle estimator (LWE) \citep{robinson1995gaussian}.
The LWE is a semiparametric estimator of the memory parameter $d$ based on the low-frequency behavior of the spectral density, and is robust to short-run dynamics and high-frequency misspecification.

\subsection{Local Whittle estimator}

Let $\{X_t\}$ be a weakly stationary process with mean $\mu$ and autocovariance function $\gamma(k)$.
The spectral density is defined as
\begin{equation}
	f(\lambda) = \frac{1}{2\pi} \sum_{k=-\infty}^{\infty} \gamma(k) e^{-i\lambda k}, \qquad \lambda \in [-\pi,\pi].
\end{equation}
Long-memory behavior is characterized by
\begin{equation}
	f(\lambda) \sim G \lambda^{-2d} \quad \text{as } \lambda \to 0^+,
\end{equation}
where $G>0$ and $d$ is the memory parameter.
The process is stationary for $d<1/2$.

Let $x_t=\log\tau_t$ denote the log inter-arrival times.
The periodogram at Fourier frequencies $\lambda_j=2\pi j/n$ is given by
\begin{equation}
	I(\lambda_j)=\frac{1}{2\pi n}\left|\sum_{t=1}^{n}x_t e^{it\lambda_j}\right|^2,
	\qquad j=1,\dots,m,
\end{equation}
where $m<n/2$ is a bandwidth parameter.
Under a local power-law approximation of the spectral density, the LWE is defined as
\begin{equation}
	\hat d=\arg\min_{d\in\Theta} R(d),
\end{equation}
with
\begin{equation}
	R(d)=\log\!\left(\frac{1}{m}\sum_{j=1}^{m}I(\lambda_j)\lambda_j^{2d}\right)
	-\frac{2d}{m}\sum_{j=1}^{m}\log\lambda_j.
\end{equation}

The bandwidth $m$ controls a bias--variance trade-off.
Single-bandwidth estimates are reported using $m^\ast=\lfloor n^{0.6}\rfloor$, which falls within the stable plateau observed across bandwidth choices.
We also assess robustness by varying $m$ over the range $m\in[n^{0.4},\,n^{0.75}]$ and identify stable plateaus in $\hat d$.

\subsection{Simulation Results}

Table~\ref{tab:single_bw} reports single-bandwidth local Whittle estimates
for selected combinations of $p$ and $\xi$.
For each pair $(p, \xi)$, let $\gamma^*$ denote the maximum admissible $\gamma$
satisfying Assumption~\ref{Ass:stability}.
We consider three specifications: the stability boundary $\gamma = \gamma^*$,
and two interior cases $\gamma = 0.9\,\gamma^*$ and $\gamma = 0.7\,\gamma^*$.
For fixed $p$, the estimated memory parameter $\hat{d}$ increases with the
excitation parameter $\xi$, indicating that stronger excitation leads to greater
persistence.
Estimates at the stability boundary are close to the theoretical upper limit of
$0.5$, consistent with near-critical dynamics.

\begin{table}[t]
	\centering
	\caption{Single-bandwidth local Whittle estimates $\hat{d}(m^\ast)$.
	For each $(p,\xi)$, $\gamma^{\max}$ denotes the largest $\gamma$ satisfying
	the stability condition~Eq.~\eqref{Eq:condition}.
	Boundary sets $\gamma = \gamma^{*}$; Interior Cases 1 and 2
	set $\gamma = 0.9\,\gamma^{*}$ and $\gamma = 0.7\,\gamma^{*}$, respectively.}
	\label{tab:single_bw}
	\renewcommand{\arraystretch}{1.3}
	\begin{tabular}{c c c c | c c c | c c c}
		\hline
		& \multicolumn{3}{c|}{Stability Boundary} & \multicolumn{3}{c|}{Interior Case 1 (0.9$\gamma^{*}$)} & \multicolumn{3}{c}{Interior Case 2 (0.7$\gamma^{*}$)} \\
		\cline{2-10}
		$p$ & $\xi$ & $\gamma$ & $\hat{d}(m^\ast)$ & $\xi$ & $\gamma$ & $\hat{d}(m^\ast)$ & $\xi$ & $\gamma$ & $\hat{d}(m^\ast)$ \\
		\hline
		1.001 & 0.1 & 4.8094 & 0.2195 & 0.1 & 4.3285 & 0.2875 & 0.1 & 3.3666 & 0.2330 \\
		& 0.5 & 0.9618 & 0.4648 & 0.5 & 0.8657 & 0.3971 & 0.5 & 0.6733 & 0.2007 \\
		& 1.0 & 0.4809 & 0.4958 & 1.0 & 0.4328 & 0.4327 & 1.0 & 0.3367 & 0.1341 \\
		\hline
		1.1   & 0.1 & 5.4505 & 0.2711 & 0.1 & 4.9055 & 0.2996 & 0.1 & 3.8150 & 0.2598 \\
		& 0.5 & 1.0901 & 0.4540 & 0.5 & 0.9811 & 0.4133 & 0.5 & 0.7631 & 0.1370 \\
		& 1.0 & 0.5451 & 0.4765 & 1.0 & 0.4905 & 0.4131 & 1.0 & 0.3815 & 0.1053 \\
		\hline
		1.5   & 0.1 & 7.5000 & 0.3024 & 0.1 & 6.7500 & 0.2964 & 0.1 & 5.2500 & 0.1501 \\
		& 0.5 & 1.5000 & 0.3899 & 0.5 & 1.3500 & 0.2698 & 0.5 & 1.0500 & 0.0985 \\
		& 1.0 & 0.7500 & 0.3242 & 1.0 & 0.6750 & 0.1979 & 1.0 & 0.5250 & 0.0601 \\
		\hline
		2.0   & 0.1 & 10.000 & 0.2933 & 0.1 & 9.0000 & 0.2152 & 0.1 & 7.0000 & 0.0921 \\
		& 0.5 & 2.0000 & 0.2457 & 0.5 & 1.8000 & 0.1561 & 0.5 & 1.4000 & 0.0335 \\
		& 1.0 & 1.0000 & 0.2096 & 1.0 & 0.9000 & 0.1152 & 1.0 & 0.7000 & 0.0043 \\
		\hline
	\end{tabular}
\end{table}

The stability of the estimator is visually confirmed in Figure \ref{fig:lwe_stability}, where $\hat{d}$ remains remarkably consistent across a wide range of bandwidths. 
This lack of sensitivity to $m$ validates the robustness of our long-memory inference and suggests that the observed power-law behavior is a structural feature of the simulated process rather than an artifact of bandwidth selection.

\begin{figure}[t]
	\centering
	\includegraphics[width=0.75\textwidth]{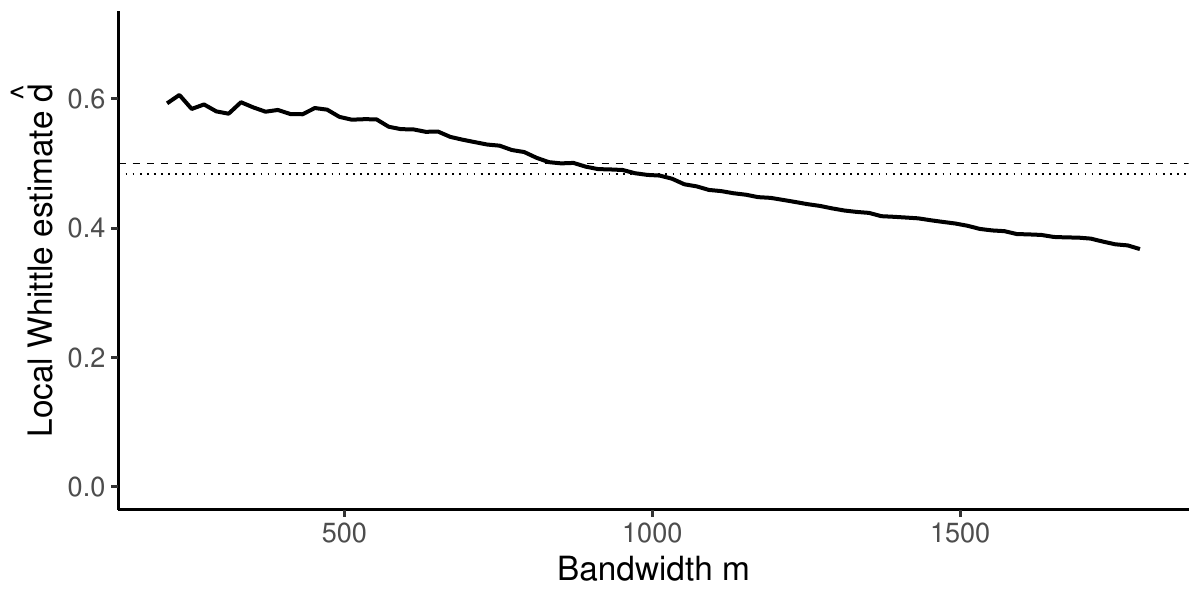}
	\caption{Stability plot of the Local Whittle estimates $\hat{d}(m)$ as a function of the bandwidth $m$ (Parameters: $p=1.1, \xi=0.5, \gamma = 1.0901$).}
	\label{fig:lwe_stability}
\end{figure}

\section{Conclusion}~\label{Sect:conc}

We proposed a self-exciting point process with power-law intensity dynamics that admits a finite-dimensional Markovian representation.
The model preserves the local update structure of power-law Hawkes processes while replacing global history dependence with a nonlinear Markov chain governing the latent intensity states.
Within a general state-space framework, we established irreducibility, aperiodicity, and positive Harris recurrence under explicit stability conditions, ensuring long-run stability of the process.
Simulation results based on the local Whittle estimator show that the proposed model exhibits long-memory behavior near the boundary of the stability region.

\section*{Funding}
This work was supported by the National Research Foundation of Korea (NRF) grant funded by the Korea government (MSIT) (No. RS-2026-25469087).

\bibliography{Bib}
\bibliographystyle{chicago}

\appendix

\section{Derivation of determinant}\label{Sect:determinant}

We use the following notations:
\begin{align}
	X_n = X_{n-1} r_{n}^{-p} + \xi, \qquad D_n = \frac{X_{n-1}}{c_{n-1}}r_n^{-(p+1)}+\frac{\xi}{\gamma}, \qquad r_n = 1 + \frac{u_n}{c_{n-1}}, \qquad c_n = \frac{X_n}{D_n}
\end{align}
with $X_0 = x$ and $c_0 = c$.
Moreover, for any function $g$ of $u_i$, the notation $\left. g(u_i) \right|_{\mathbf{0}}$ indicates that the function is evaluated at $u_i = 0$ for all $i$.
Since $\left. r_n \right|_{\mathbf{0}}=1$, we have
\begin{equation}
	\left. X_1 \right|_{\mathbf{0}} = x + \xi,  \qquad
	\left. D_1 \right|_{\mathbf{0}} = \frac{\gamma x + c \xi}{\gamma c}, \qquad \left. c_1 \right|_{\mathbf{0}} = \frac{\gamma c(x + \xi)}{\gamma x + c \xi}, \label{Eq:1_0}
\end{equation}
and
\begin{equation}
\left. X_2 \right|_{\mathbf{0}} = x + 2 \xi,\qquad \left. D_2 \right|_{\mathbf{0}} = \frac{\gamma x + 2 c \xi}{\gamma c}, \quad \left. c_2 \right|_{\mathbf{0}} = \frac{\gamma c(x + 2\xi)}{\gamma x + 2 c \xi}. 
\label{Eq:2_0}
\end{equation}

\begin{lemma}\label{Lemma:derivatives}
The evaluated derivatives of of $X_2$ and $c_2$ with respect to $u_i$ at $u_1 = u_2 =0$ are
\begin{align}
&\left.\frac{\partial X_2}{\partial u_1}\right|_{\mathbf{0}} = -\frac{px}{c}, \quad\quad\quad\quad \left.\frac{\partial X_2}{\partial u_2}\right|_{\mathbf{0}} =  -\frac{p(\gamma x+c\xi)}{\gamma c}, \label{Eq:eval_1}\\
&\left.\frac{\partial c_2}{\partial u_1}\right|_{\mathbf{0}}
= \frac{\gamma x\big(\gamma x + 2p\xi(\gamma-c) + 2\gamma\xi\big)}{(2c\xi+\gamma x)^2} , \quad\quad\quad\quad
\left.\frac{\partial c_2}{\partial u_2}\right|_{\mathbf{0}}
= (\gamma x + c\xi) \frac{B}{M}	 \label{Eq:eval_2}
\end{align}
where
\begin{align*}
&B = \gamma x^2 + (p+2)\gamma \xi x + (1-p)c\xi x + 2c\xi^2, \\
&M = (x+\xi) (\gamma x + 2 c\xi)^2.
\end{align*}
\end{lemma}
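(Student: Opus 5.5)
The plan is to differentiate the two-step composition with the chain rule, evaluate everything at $u_1=u_2=0$ (so $r_1=r_2=1$), and reuse the base values in Eqs.~\eqref{Eq:1_0}--\eqref{Eq:2_0}.

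\textbf{One-step derivatives.} At a state $(X_{n-1},c_{n-1})$ with control $u_n$ we have
\[
\partial_{u_n} r_n = \frac{1}{c_{n-1}},\qquad
\partial_{u_n} X_n = -\frac{p X_{n-1}}{c_{n-1}} r_n^{-p-1},\qquad
\partial_{u_n} D_n = -\frac{(p+1)X_{n-1}}{c_{n-1}^2} r_n^{-p-2}.
\]
We also need the partials with respect to the previous state:
\[
\partial_{X_{n-1}} X_n = r_n^{-p},\qquad
\partial_{c_{n-1}} X_n = \frac{p X_{n-1} u_n}{c_{n-1}^2} r_n^{-p-1},
\]
\[
\partial_{X_{n-1}} D_n = \frac{r_n^{-p-1}}{c_{n-1}},\qquad
\partial_{c_{n-1}} D_n = -\frac{X_{n-1}}{c_{n-1}^2} r_n^{-p-1} + \frac{(p+1)X_{n-1}u_n}{c_{n-1}^3} r_n^{-p-2}.
\]
At $u_n=0$ every term containing $u_n$ vanishes, so $\partial_{c_{n-1}}X_n=0$ and $\partial_{c_{n-1}}D_n=-X_{n-1}/c_{n-1}^2$. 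For $c_n=X_n/D_n$ we then use the quotient rule $\partial c_n = (\partial X_n\, D_n - X_n\,\partial D_n)/D_n^2$.

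\textbf{Derivatives of $X_2$.} For $\partial X_2/\partial u_2$, evaluate at step $2$ with $X_1=x+\xi$ and $c_1=\gamma c(x+\xi)/(\gamma x+c\xi)$. This gives $-pX_1/c_1 = -p(\gamma x+c\xi)/(\gamma c)$, as claimed. For $\partial X_2/\partial u_1$, the chain rule gives $\partial_{X_1}X_2\,\partial_{u_1}X_1+\partial_{c_1}X_2\,\partial_{u_1}c_1$. At $u_2=0$ the second term vanishes and $\partial_{X_1}X_2=1$, leaving $-px/c$.

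\textbf{Derivatives of $c_2$.} Write $\partial c_2 = (\partial X_2\,D_2 - X_2\,\partial D_2)/D_2^2$ with $D_2|_{\mathbf 0}=(\gamma x+2c\xi)/(\gamma c)$ and $X_2|_{\mathbf 0}=x+2\xi$.

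For $u_2$: use $\partial_{u_2}D_2=-(p+1)X_1/c_1^2$ together with the value of $\partial_{u_2}X_2$ above, substitute $c_1$, and clear denominators. The common factor $(\gamma x+c\xi)$ comes out, and the remainder should collect into $B/M$ with $M=(x+\xi)(\gamma x+2c\xi)^2$.

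For $u_1$: first compute $\partial_{u_1}c_1$ from the step-one formulas, namely $\partial_{u_1}X_1=-px/c$ and $\partial_{u_1}D_1=-(p+1)x/c^2$, using $D_1=(\gamma x+c\xi)/(\gamma c)$. Then
\[
\partial_{u_1}D_2=\frac{\partial_{u_1}X_1}{c_1}-\frac{X_1}{c_1^2}\,\partial_{u_1}c_1 .
\]
Combine this with $\partial_{u_1}X_2=-px/c$ in the quotient rule.

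\textbf{Main obstacle.} The main difficulty is the algebraic simplification of $\partial c_2/\partial u_1$, where the $c_1$ terms nest. I would put everything over the common denominator $(\gamma x+c\xi)^2(\gamma x+2c\xi)^2$ and expect a factor $(\gamma x+c\xi)^2$ to cancel. This should leave the numerator $\gamma x(\gamma x+2p\xi(\gamma-c)+2\gamma\xi)$. As a sanity check I would confirm that setting $x\to0$ makes the derivative vanish, and cross-check one numeric parameter set.
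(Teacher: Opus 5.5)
Your proposal is correct and follows essentially the paper's route: the chain rule at $\mathbf{0}$, the quotient rule for $c_2=X_2/D_2$, and $\partial c_1/\partial u_1$ as the intermediate quantity. Your bookkeeping is in fact the consistent one, since you hold $c_1$ fixed and so implicitly use $\partial D_2/\partial X_1|_{\mathbf{0}}=1/c_1$, whereas the paper's $\partial c_2/\partial X_1|_{\mathbf{0}}=1/D_2$ comes from holding $D_1$ rather than $c_1$ fixed, and its substitution as literally written does not return the stated $\partial c_2/\partial u_1$ (at $x=c=\gamma=\xi=1$, $p=2$ it gives $-1/3$ instead of the correct $1/3$, while your version gives $1/3$). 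Your ``main obstacle'' also disappears if you use the identity $X_1/c_1\equiv D_1$: at $u_2=0$ one has $c_2=(X_1+\xi)/(D_1+\xi/\gamma)$, so $\partial_{u_1}D_2|_{\mathbf{0}}=\partial_{u_1}D_1|_{\mathbf{0}}=-(p+1)x/c^2$, and the stated formula follows in one line of algebra.
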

\begin{proof}
	We begin with Eq.~\eqref{Eq:eval_1}.
	First, note that
	$$ \frac{\partial X_2}{\partial c_1} = p X_1 r_2^{-p-1} u_2 c_1^{-2}.$$
	Hence,
	\begin{equation}
	\left.\frac{\partial X_2}{\partial u_1}\right|_{\mathbf{0}}
	=
	\left.\frac{\partial X_2}{\partial X_1}\right|_{\mathbf{0}}\left.\frac{\partial X_1}{\partial u_1}\right|_{\mathbf{0}}
	+
	\left.\frac{\partial X_2}{\partial c_1}\right|_{\mathbf{0}}\left.\frac{\partial c_1}{\partial u_1}\right|_{\mathbf{0}}
	=
	1\cdot\left(-\frac{p x}{c}\right)+0\cdot(\cdots)
	=
	-\frac{p x}{c}, \label{Eq:X2_u1_0}
	\end{equation}
	Similarly,
	\begin{equation}
	\frac{\partial X_2}{\partial u_2} = 
	X_1\cdot(-p) r_2^{-(p+1)}\cdot\frac{1}{c_1}
	\quad\Rightarrow\quad
	\left.\frac{\partial X_2}{\partial u_2}\right|_{\mathbf{0}}
	= -\frac{p X_1}{c_1}\bigg|_{\mathbf{0}}
	= -\frac{p(\gamma x+c\xi)}{\gamma c}. \label{Eq:X2_u2_0}
	\end{equation}

	For the first expression in Eq.~\eqref{Eq:eval_2}, observe that
	\begin{equation}
	\left.\frac{\partial c_2}{\partial u_1}\right|_{\mathbf{\mathbf{0}}}
	= \left.\frac{\partial c_2}{\partial X_1}\right|_{\mathbf{0}}\left.\frac{\partial X_1}{\partial u_1}\right|_{\mathbf{0}} + 
	\left.\frac{\partial c_2}{\partial c_1}\right|_{\mathbf{0}}\left.\frac{\partial c_1}{\partial u_1}\right|_{\mathbf{0}}. \label{Eq:dc2_du1}
	\end{equation}
	From Eqs.~\eqref{Eq:1_0}, \eqref{Eq:2_0}, and
	\begin{align*}
	&\frac{\partial {X_2}}{\partial X_1} = r_2^{-p} \quad\Rightarrow\quad  \left. \frac{\partial {X_2}}{\partial X_1} \right|_{\mathbf{0}} = 1 \\ 
	& \frac{\partial{D_2}}{\partial X_1} =  - D_1 (p+1) r_2^{-(p+2)}\frac{\partial r_2}{\partial X_1} =  (p+1) r_2^{-(p+2)}\frac{u_2 D_1^2}{X_1^2} \quad\Rightarrow\quad  \left. \frac{\partial {D_2}}{\partial X_1} \right|_{\mathbf{0}} = 0,
	\end{align*}
	we obtain
	\begin{align}
		\left.\frac{\partial c_2}{\partial X_1}\right|_{\mathbf{0}} = \left. \frac{(\partial_{X_1}X_2) D_2 - X_2 (\partial_{X_1}D_2)}{D_2^2} \right|_{\mathbf{0}} = \left. \frac{1}{D_2} \right|_{\mathbf{0}} =  \frac{\gamma c}{\gamma x + 2c\xi }. \label{Eq:dc2_dX1}
	\end{align} 
	Also,
	\begin{equation}
	\left. \frac{\partial X_1}{\partial u_1} \right|_{\mathbf{{0}}} = \left. - x p r_{1}^{-p-1}\frac{1}{c} \right|_{\mathbf{{0}}}= -\frac{px}{c}. \label{Eq:dX_1_du_1_0}
	\end{equation}
	
	Next, since
	\begin{align*}
		\frac{\partial c_2}{\partial c_1} = \frac{\partial}{\partial c_1} \left(\frac{X_2}{D_2}\right) = \frac{(\partial_{c_1}X_2) D_2 - X_2 (\partial_{c_1}D_2)}{D_2^2} ,
	\end{align*}
	and
	$$ \left. (\partial_{c_1}X_2) \right|_{\mathbf{0}} = \left. pX_1 r_2^{p-1} u_2 \frac{1}{c_1^{2}} \right|_{\mathbf{0}} = 0, \qquad  \left. (\partial_{c_1}D_2) \right|_{\mathbf{0}} = \left. - \frac{X_1}{c_1^2} \right|_{0} = - \frac{(\gamma x + c \xi)^2}{(c\gamma)^2 (x + \xi)},
	$$
	by Eq.~\eqref{Eq:2_0},
	we have
	\begin{equation}
	\left.\frac{\partial c_2}{\partial c_1}\right|_{\mathbf{0}}
	= \frac{(x+2\xi)(\gamma x + c\xi)^2}{(x+\xi)(\gamma x + 2c\xi)^2 }. \label{Eq:dc2_dc1}
	\end{equation}
	From Eqs.~\eqref{Eq:1_0} and \eqref{Eq:dX_1_du_1_0} and using
	$$\left. \partial_{u_1} D_1 \right|_{\mathbf{0}} = - \frac{(p+1)x}{c^2},$$
	we have
	$$
	\left. D_1 (\partial_{u_1} X_1) - X_1 (\partial_{u_1} D_1) \right|_{\mathbf{0}}
	=  -\frac{px}{c} \frac{x \gamma + \xi c}{c \gamma} + \frac{(p+1)x}{c^2}(x + \xi )
	$$
	and
	\begin{equation}
	\left.\frac{\partial c_1}{\partial u_1}\right|_{\mathbf{0}}
	=\left. \frac{D_1 (\partial_{u_1} X_1) - X_1 (\partial_{u_1} D_1) }{D_1^2} \right|_{0}=  \frac{x\gamma\big(\gamma x + (p+1)\gamma\xi - p c\xi\big)}{(\gamma x + c \xi)^2}. \label{Eq:dc1_du1}
	\end{equation}
	Substituting Eqs.~\eqref{Eq:dc2_dX1},\eqref{Eq:dX_1_du_1_0},\eqref{Eq:dc2_dc1} and \eqref{Eq:dc1_du1} into Eq.~\eqref{Eq:dc2_du1} yields the desired expression for $\partial c_2 / \partial u_1$.

	For the second equality in Eq.~\eqref{Eq:eval_2}, note that
	$$
	\frac{\partial c_2}{\partial u_2}
	=\frac{(\partial_{u_2}X_2) D_2 - X_2 (\partial_{u_2}D_2)}{D_2^2}.
	$$
	From Eq.~\eqref{Eq:X2_u2_0} and
	$$
	\partial_{u_2} D_2 = \frac{X_1}{c_1} (-(p+1)) r_2^{-(p+2)}\frac{1}{c_1}
	\quad\Rightarrow\quad
	\left.\partial_{u_2}D_2\right|_{\mathbf{0}} = \left. -\frac{(p+1) X_1}{c_1^2} \right|_{\mathbf{0}},
	$$
	we find
	\begin{align*}
	 \left. (\partial_{u_2}X_2)D_2 - X_2(\partial_{u_2}D_2) \right|_{\mathbf{0}}
	&= \left. \left(-\frac{p X_1}{c_1}\right)D_2 - (X_1+\xi)\left(-\frac{(p+1) X_1}{c_1^2}\right) \right|_{\mathbf{0}} \\
	&= \left. \frac{X_1}{c_1}\left( (p+1) \frac{X_1 + \xi}{c_1} - p D_2 \right) \right|_{\mathbf{0}} \\
	&=  \frac{\gamma x + c\xi}{c \gamma } 
	\left( (p+1)\frac{(x+2\xi)(\gamma x + c\xi)}{c\gamma(x+\xi)} -p\frac{\gamma x+2c\xi}{c \gamma}\right).
	\end{align*}
	Thus,
	$$
	\left.\frac{\partial c_2}{\partial u_2}\right|_{\mathbf{0}}
	= \frac{(\gamma x + c\xi)}{(x+\xi)(\gamma x + 2c\xi)^2}
	\Big( \gamma x^2 + ((p+2)\gamma + (1-p)c ) \xi x + 2c\xi^2 \Big),
	$$
	which completes the proof.
\end{proof}

Finally, for the determinant,
$$
\det J\Big|_{\mathbf{0}}
= \left.\frac{\partial X_2}{\partial u_1}\right|_{\mathbf{0}}
\left.\frac{\partial c_2}{\partial u_2}\right|_{\mathbf{0}} - 
\left.\frac{\partial X_2}{\partial u_2}\right|_{\mathbf{0}}
\left.\frac{\partial c_2}{\partial u_1}\right|_{\mathbf{0}}
$$
and
$$\det J\Big|_{\mathbf{0}} = \frac{p (p+1) x \xi (\gamma - c) (x+2\xi) (\gamma x + c \xi)}{c (x + \xi) (\gamma x + 2 c \xi)^2}.$$

\end{document}